\DeclareFontFamily{U}{dutchcal}{\skewchar\font=45 }
\DeclareFontShape{U}{dutchcal}{m}{n}{<-> s*[1.0] dutchcal-r}{}
\DeclareFontShape{U}{dutchcal}{b}{n}{<-> s*[1.0] dutchcal-b}{}
\DeclareMathAlphabet{\mathlcal}{U}{dutchcal}{m}{n}
\SetMathAlphabet{\mathlcal}{bold}{U}{dutchcal}{b}{n}
\DeclareMathOperator{\sheafhom}{\mathscr{H}\text{\kern -3pt {\calligra\large om}}\,}
\DeclareMathOperator{\sheafend}{\mathscr{E}\text{\kern -3pt {\calligra\large nd}}\,}
\theoremstyle{plain}
\newtheorem{thmx}{Theorem}
\newtheorem{thm}{Theorem}[section]
\newtheorem{corx}[thmx]{Corollary}
\newtheorem{lem}[thm]{{Lemma}}
\newtheorem{prop}[thm]{Proposition}
\newtheorem{conjx}[thmx]{Conjecture}
\newtheorem{exa}[thm]{Example}
\newtheorem{rmk}[thm]{Remark}
\theoremstyle{definition}
\newtheorem{definition}[subsubsection]{Definition}
\newtheorem{definition-proposition}[subsubsection]{Definition-Proposition}
\newtheorem{theorem}[subsubsection]{Theorem}
\newtheorem{remark}[subsubsection]{Remark}
\newtheorem{proposition}[subsubsection]{Proposition}
\newtheorem*{proposition*}{Proposition}
\newtheorem{lemma}[subsubsection]{Lemma}
\newtheorem*{quest}{Question}
\newtheorem*{theorem*}{Theorem}
\newcommand{\namelabel}[1]{%
  \phantomsection
  \renewcommand{\@currentlabel}{#1}
  \label{#1}
}
\newcommand{\thistheoremname}{}
\newtheorem*{genericthm*}{\thistheoremname}
\newenvironment{namedthm*}[1]{\renewcommand{\thistheoremname}{#1}%
	\begin{genericthm*}}
	{\end{genericthm*}}
\theoremstyle{remark}
\numberwithin{equation}{section}
\def\At{\mathrm{At(P)}}
\newcommand{\p}{\partial}
\newcommand{\om}{\omega}
\newcommand{\cc}{\frac{1}{2}}
\newcommand{\bg}{g^{\star_{h_0}}}
\newcommand{\Mdol}{\mathcal M_{\mathrm{Dol}}(\mathcal X/S)}
\newcommand{\Ch}{D_{h_0}}
\newcommand{\Ehol}{\mathcal A^{0,0}(\mathrm{End}E\otimes K_{X_0})}
\newcommand{\Eahol}{\mathcal A^{0,1}(\mathrm{End}E)}
\newcommand{\Hy}{\mathbb H^1(X_0,(\mathrm{End}E,\mathrm{ad}(\theta)))}
\newcommand{\Dol}{\mathrm{Dol}}
\newcommand{\End}{\mathrm{End}}
\newcommand{\SL}{\mathrm{SL}(n,\mathbb C)}
\newcommand{\tad}{\mathrm{ad}(\theta)}
\newcommand{\Ker}{\mathrm{Ker}}
\begin{document}
\title[Non-Abelian KS map and non-existence of holomorphic isomonodromic deformation]{Non-Abelian Kodaira-Spencer Map and non-existence of holomorphic isomonodromic deformation of Higgs bundles over Teichm\"uller spaces}

\author[Tianzhi Hu]{Tianzhi Hu}
\address{ School of Mathematics and Statistics, Wuhan University, Luojiashan, Wuchang, Wuhan, Hubei, 430072, P.R. China}
\email{hutianzhi@whu.edu.cn}

\author{Ruiran Sun}
 \address{School of Mathematical Sciences, Xiamen University, Xiamen 361005, China}
\email{ruiransun@xmu.edu.cn}

\author[Kang Zuo]{Kang Zuo}
\address{ School of Mathematics and Statistics, Wuhan University, Luojiashan, Wuchang, Wuhan, Hubei, 430072, P.R. China; Institut f\"ur Mathematik, Universit\"at Mainz, Mainz, Germany, 55099}
\email{zuok@uni-mainz.de}
\begin{abstract}
We define the isomonodromic deformation of a Higgs bundle on a compact Riemann surface via the Hitchin--Simpson correspondence and the isomonodromic deformation of the associated local system. This construction yields a real-analytic section of the relative Dolbeault moduli space and hence a real-analytic foliation, generalizing the Betti foliation arising from the Betti map in the study of abelian schemes. We give cohomological expressions for the holomorphic and anti-holomorphic derivatives of the isomonodromic deformation and use the latter to extend the classical non-abelian Kodaira--Spencer map. 

We prove that if the isomonodromic deformation of a graded Higgs bundle is non-holomorphic, then the deformed Higgs field is non-nilpotent. We also give a short new proof of the non-existence of holomorphic isomonodromic deformations for generic Higgs bundles over Teichm\"uller space $\mathcal T_g$, previously established in \cite{biswas}. This shows that global holomorphicity imposes strong restrictions on the initial Higgs bundle. Motivated by this observation, we prove, under suitable numerical conditions, that non-nilpotent or non-unitary Higgs bundles have non-holomorphic isomonodromic deformations over $\mathcal T_g$. These results may be viewed as analogues of the Landesman--Litt finite-image theorem for MCG-finite representations \cite{LL}.

This paper synthesizes and refines our two earlier preprints \cite{HSZ,HSZII} (arXiv:2511.14272 and arXiv:2512.15478), and makes further progress on the non-existence problem for holomorphic
isomonodromic deformations of higher rank Higgs bundles over Teichm\"uller spaces.
\end{abstract}

\subjclass[2010]{14D22,14C30}
\keywords{}

\maketitle

\setcounter{tocdepth}{1}
\tableofcontents

\section{Introduction}

\subsection{Nonabelian Hodge correspondence}
Let $X$ be a compact K\"ahler manifold. The celebrated Nonabelian Hodge correspondence  developed primarily by Donaldson \cite{Don} and Uhlenbeck-Yau \cite{UY} in the vector bundle setting and by Hitchin \cite{Hit}, Corlette \cite{Corl} and Simpson \cite{Simp88,Simp92} in the Higgs bundle setting, establishes a real analytic homeomorphism between the moduli space of semisimple $\mathbb C$-local systems and the moduli space of polystable Higgs bundles with vanishing Chern classes. This correspondence is denoted by
$$\Psi:\mathcal M_{\mathrm{B}}(X)\stackrel{\sim}{\longrightarrow} \mathcal M_{\mathrm{Dol}}(X),$$
where we consider $\mathrm{GL}(n,\mathbb C)$ local systems and Higgs bundles unless otherwise stated. In this paper, we work primarily in the Higgs bundle setting.


On the other hand, the classical Riemann-Hilbert correspondence over $X$ provides a biholomorphic map $\mathcal M_{\mathrm{B}}(X)\simeq\mathcal M_{\mathrm{DR}}(X)$, where the latter is the moduli space of semisimple $\mathrm{GL}(n,\mathbb C)$ flat bundles over $X$.

\subsection{Non-abelian Hodge theory for a projective family}\label{nabHT}
For a smooth projective family $f:\mathcal X\to S$ over a quasi-projective base, with a central fiber $X_0$ over $0\in S$, Griffiths \cite{Gri69, Gri70} developed the theory of variation of Hodge structures, which is crucial in deformation theory and Hodge theory (see \cite{CMP}). 

Inspired by Deligne, Simpson \cite{Simp95} introduced a non-abelian analogue of classical variation of Hodge structures. Deligne's twistor space is defined as 
$$\mathcal P:\mathcal M_{\mathrm{Del}}(\mathcal X/S)\to S\times\mathbb P^1,$$ 
whose fiber at $S\times \{1\}$ is $\mathcal M_{\mathrm{DR}}(\mathcal X/S)$ and at $S\times \{0\}$ is $\Mdol$. Both relative moduli spaces are generally not smooth, even if $\mathcal X/S$ is a smooth family. In this paper, all tangent bundles of these relative moduli spaces are defined only at their smooth points.

In \cite{Simp95}, the \textbf{non-abelian Gauss-Manin connection} $\nabla_{GM}$ is defined via the isomonodromic deformation on $\mathcal M_{\mathrm{Del}}(\mathcal X/S)|_{S\times\mathbb G_m}$, and the \textbf{non-abelian Hodge filtration} is the $\mathbb G_m$-action on $\mathcal M_{\mathrm{Del}}(\mathcal X/S)|_{S\times\mathbb G_m}$. Consequently, $\Mdol$ can be interpreted as the '\textbf{non-abelian Hodge bundle}' of $\mathcal M_{\mathrm{DR}}(\mathcal X/S)$ due to the Rees module construction. 

Let $\mathcal M^{\mathrm{gr}}_{\mathrm{Dol}}(\mathcal X/S)\subset\Mdol$ denote the moduli of graded Higgs bundles. In \cite[Lemma 4.1]{Simp10}, Simpson proved that for any $(E,D)\in \mathcal M_{\mathrm{Del}}(X_0)|_{\mathbb A^1}$, 
$$\lim\limits_{\lambda\in\mathbb G_m,\ \lambda\to 0}(E,\lambda\cdot D)\text{ exists and is contained in } \mathcal M^{\mathrm{gr}}_{\mathrm{Dol}}(X_0).$$ 

Let $p_0:\mathcal M^{\mathrm{gr}}_{\mathrm{Dol}}(\mathcal X/S)\to S$ be the projection and $\rho_{KS}$ be the Kodaira-Spencer map of $\mathcal X/S$. 

As proposed in \cite{Simp95}, and by taking the residue at $S\times \{0\}$ of the non-abelian Gauss-Manin connection (as in \cite{CT} and \cite{FS}), we obtain the following \textbf{non-abelian Kodaira-Spencer map} (or non-abelian Higgs field): 
\begin{equation}\label{Theta_ks}\begin{aligned}\Theta_{KS}:p_0^\ast TS&\to T({\Mdol/S})\\ \{(E,\bar\p,\theta),\frac{\p}{\p t}\}&\mapsto \theta_\ast\circ\rho_{KS}(\frac{\p}{\p t}),
\end{aligned}\end{equation}
where $(E,\bar\p,\theta)\in\mathcal M_{\mathrm{Dol}}^{\mathrm{gr}}(X_s)$, $\frac{\p}{\p t}\in T_sS,$ and 
\begin{align*}\theta_\ast:H^1(X_s,T_{X_s})\to\mathbb H^1(X_s,(\mathrm{End}E,\mathrm{ad}(\theta)))=T_{(E,\bar\p,\theta)}\mathcal M_{\mathrm{Dol}}(X_s)
\end{align*}
is the induced map from the morphism of two complexes (each column is a complex)
\[
\begin{tikzcd}
0 \arrow[r, ]                                                   & \mathrm{End} E \otimes  K_{X_s} \\
T_{X_s} \arrow[r, "\theta"] \arrow[u, ] & \mathrm{End} E \arrow[u, "{\tad}"]              
\end{tikzcd}
\]
We remark that $\theta_\ast$ is defined for any Higgs bundle, not necessarily the graded one.

Let $p:\Mdol\to S$ be the projection. We offer a different approach to derive and naturally extend the non-abelian Kodaira-Spencer map $\Theta_{KS}$. 

By performing the \textbf{isomonodromic deformation of a Higgs bundle} (explained in section \ref{1-3}), we obtain a real analytic section $\sigma:S\to\Mdol$. Its anti-holomorphic derivative at $\sigma(s)$ along $\p/\p t\in T_s S$ is given by
\begin{equation}\label{Phi_ks}\begin{aligned}\Phi^{0,1}_{KS}:p^\ast TS&\to T^{1,0}({\Mdol/S})\xrightarrow{\text{conjugate}} T^{0,1}({\Mdol/S})\\ \{(E,\bar\p,\theta),\frac{\p}{\p t}\}&\mapsto \theta_\ast\circ\rho_{KS}(\frac{\p}{\p t}) \xrightarrow{\text{conjugate}} \overline{\theta_\ast\circ\rho_{KS}(\frac{\p}{\p t})},
\end{aligned}\end{equation}

Note that $\Theta_{KS}$ in \eqref{Theta_ks} is defined only for a graded Higgs bundle, while $\Phi^{0,1}_{KS}$ is defined for any Higgs bundle. Therefore, $\Phi_{KS}^{0,1}$ \textbf{extends} $\Theta_{KS}$ to any Higgs bundle in $\Mdol$.

Esnault and Kerz previously asked
whether the anti-holomorphic derivative $\Phi_{KS}^{0,1}$ has such a cohomology expression \eqref{Phi_ks}. We are grateful to them for highlighting this perspective.
\subsection{Isomonodromic deformation of a Higgs bundle and its derivative}\label{1-3}
As our focus is on the local properties of isomonodromic deformation, we assume $S$ is a germ of some variety. An isomonodromic section $\tau:S\to\mathcal M_{\mathrm{DR}}(\mathcal X/S)\cong\mathcal M_{\mathrm{B}}(\mathcal X/S)$ represents an isomonodromic deformation for a given flat vector bundle (or a local system). 

The Nonabelian Hodge correspondence $\Psi$ in \eqref{DUYHS} provides the \textbf{isomonodromic deformation section} of Higgs bundle, defined by:
$$\sigma:=\Psi\circ\tau:S\to \Mdol.$$

We refer to this $\sigma$ as an \textbf{isomonodromic deformation} of a Higgs bundle $\sigma(0)=(E,\bar\p,\theta)\in\mathcal M_{\mathrm{Dol}}(X_0)$. Instead of being holomorphic, this section $\sigma$ is smooth \cite[Theorem 4.7]{Tos} and real analytic \cite[Theorem 4.23]{CTW}. All isomonodromic sections of $\Mdol$ define a real analytic foliation on $\Mdol$, called the \textbf{isomonodromic foliation} on $\Mdol$.

The Betti map of an abelian scheme, introduced in \cite[Section 2.1]{CMZ}, is a valuable tool in studying Diophantine problems, including the distribution of torsion values in an abelian scheme \cite{CMZ, ACZ} and the geometric Bogomolov conjecture \cite{CGHX}. We observe that (in Section \ref{Bettimap}):
\begin{itemize}
\item The Betti map of an abelian scheme $\mathcal A\to S$ is equivalent to a real analytic map 
$$\mathcal B:\mathcal A\to \mathcal M_{\mathrm B}(\mathcal A/S,\mathrm U(1))\cong S\times \text{a fixed real torus}.$$
defined in \eqref{Bet} by the unitary monodromy representation.
\item When considering the relative Jacobian $\mathrm{Jac}(\mathcal X/S)$ of a family of compact Riemann surfaces $\mathcal X/S$, the Betti map is a special case of the Nonabelian Hodge correspondence:
\[\begin{tikzcd}\mathrm{Jac}(\mathcal X/S)\arrow[r,"\mathcal B"]\arrow[d,equals]&\mathcal M_{\mathrm B}(\mathrm{Jac}(\mathcal X/S)/S,\mathrm U(1))\arrow[d,equals]\\
\mathcal M_{\mathrm {Dol}}(\mathcal X/S,\mathrm U(1))\arrow[r]&\mathcal M_{\mathrm B}(\mathcal X/S,\mathrm U(1))\end{tikzcd}\]
The Betti foliation on $\mathrm{Jac}(\mathcal X/S)$, defined by the level sets of the Betti map, is precisely the isomonodromic foliation on $\mathcal M_{\mathrm {Dol}}(\mathcal X/S,\mathrm U(1))$.
\end{itemize}

By \cite[Proposition 2.1]{CMZ} and \cite[Section 2.1]{CGHX}, any level set of a Betti map is a holomorphic section of the abelian scheme. However, an isomonodromic deformation $\sigma:S\to\Mdol$ is not always holomorphic.
\begin{quest} When is an isomonodromic deformation $\sigma:S\to\Mdol$ holomorphic?\end{quest}
\begin{flushleft}\textbf{Setup:} We focus on the smooth projective family of compact Riemann surfaces $\mathcal X/S$. Nevertheless, the main ingredient in our paper, i.e. using the deformation of the harmonic metric to study the deformation of the Higgs bundle, should work for any higher dimensional family.\end{flushleft}

After reviewing the deformation theory of a Higgs bundle in $\Mdol$ using the theory of the Atiyah bundle (developed in \cite{Ati}) and studying the first-order deformation of the harmonic metric, we arrive at the following {\bf cohomology  representation} for the tangent map of $\sigma$, which answers the question regarding holomorphicity. 
\begin{thmx}\label{thm-A}
(i) The \textbf{holomorphic derivative} $\Pi^{1,0}\sigma_\ast(\frac{\p}{\p t})$ of the isomonodromic section $\sigma:S\to\Mdol$ is given by
\begin{align*}\Phi^{1,0}_{KS}:T_0S&\to T^{1,0}_{\sigma(0)}\Mdol\\
\frac{\p}{\p t}&\mapsto [(-\cc\Ch^{1,0}g,-\cc[\theta^{\star_{h_0}},g],\eta)],\end{align*}
where this triple is a deformation class defined in Proposition \ref{holtan}. Here $[\eta]:=\rho_{KS}(\p/\p t)$ is the Kodaira-Spencer class, and $g$ is a smooth endomorphism of $E$ satisfying the PDE \eqref{equ_g} in Proposition \ref{pde} such that $g$ is uniquely determined by $\frac{\p}{\p t}$ and the initial Higgs bundle $\sigma(0)\in\Mdol$;\\
(ii) The \textbf{anti-holomorphic derivative} $\Pi^{0,1}\sigma_\ast(\frac{\p}{\p t})$ for the isomonodromic section $\sigma:S\to\Mdol$ is 
\begin{align*}\Phi^{0,1}_{KS}:T_0S\stackrel{\rho_{KS}}{\longrightarrow}H^1(T_{X_0})\stackrel{\theta_\ast}{\longrightarrow}T_{\sigma(0)}^{1,0}\mathcal M_{\mathrm{Dol}}(X_0)\stackrel{\text{conjugate}}{\longrightarrow}T_{\sigma(0)}^{0,1}\mathcal M_{\mathrm{Dol}}(X_0)\hookrightarrow T_{\sigma(0)}^{0,1}\Mdol.
\end{align*}
\end{thmx}
We remark that \cite[Proposition 4.6]{Tos} also gives the first order deformation of the isomonodromic deformation, but  he does not distinguish the holomorphic and anti-holomorphic parts and does not give a cohomology interpretation. Moreover, the Levi form of the energy functional of $\sigma$ is concerned in \cite{Tos} and \cite{CTW}. There they prove the constancy of the energy functional is equivalent to the holomorphicity of $\sigma$.

We refer to the above derivatives $\Phi^{1,0}_{KS}$ and $\Phi^{0,1}_{KS}$ as the \textbf{non-abelian Kodaira-Spencer maps} because they arise directly from the isomonodromic deformation and the Nonabelian Hodge correspondence.

The partial differential equation (PDE) that $g$ satisfies is a linear non-homogeneous second-order elliptic PDE. The complex conjugation of $\Phi^{0,1}_{KS}$ is indeed a sheaf morphism $ p^\ast TS\to T(\Mdol/S)$. But the holomorphic part $\Phi^{1,0}_{KS}$ seems to be \textbf{transcendental} even in the rank one case, i.e. $\mathcal M_{\mathrm{Dol}}(\mathcal X/S,\mathbb C^\ast)$.

\subsection{Holomorphicity and rigidity} We shall explain in this section: the holomorphicity controls the deformation rigidity of $\sigma$. 

A graded Higgs bundle over $X_0$ is equivalent to the Hodge bundle of a polarized $\mathbb C$-variation of Hodge structures over $X_0$ (see \cite{CMP}). Simpson \cite{Simp92, Simp10} also proved that graded Higgs bundles are precisely those Higgs bundles invariant under the $\mathbb C^\ast$ action: 
$(E,\bar\p,\theta)\stackrel{\lambda\in\mathbb C^\ast}{\longrightarrow}(E,\bar\p,\lambda\theta).$

We first consider the isomonodromic deformation $\sigma:S\to\Mdol$ of a graded Higgs bundle $\sigma(0) \in\mathcal M_{\mathrm{Dol}}(X)$.

\begin{quest} Is the isomonodromically deformed Higgs bundle still graded?\end{quest}

This question probes the commutativity between the $\mathbb C^\ast$ action and the isomonodromic deformation of a Higgs bundle. The answer is generally no: \cite[Theorem 1.4]{KYZ} shows that if  $\Phi_{KS}^{0,1}$ is non-zero, then $\sigma(t)$ is not a graded Higgs bundle for $t$ near $0$. 

We can prove the following stronger result by applying Theorem \ref{thm-A}:
\begin{thmx}[=Theorem~\ref{nonnil}]\label{thm E} Assume (a) $\sigma(0):=(E,\bar\p,\theta)$ is a graded Higgs bundle on $X$, and \\(b) the non-abelian Kodaira-Spencer map $\Phi^{0,1}_{KS}$ is non-zero at $\sigma(0)$ for some $\frac{\p}{\p t}\in T_0S$.\\
Then the isomonodromic deformation of $(E,\bar\p,\theta)$ along the direction $\frac{\p}{\p t}\in T_0S$ is non-nilpotent.
\end{thmx}
In the proof of Theorem \ref{thm E}, we show that the image of $\Phi^{0,1}_{KS}$ is contained in the horizontal direction (defined in \cite[section 1.3]{moch24}) of the Hitchin fibration map.

Motivated by Theorem~\ref{thm E}, the equivalence between holomorphicity and gradedness has been further clarified by Hu, Sun, Yang, and Zuo \cite{HSYZ}. Starting from a graded Higgs bundle associated with a polarized \(\mathbb C\)-variation of Hodge structures, they prove that the non-abelian Noether--Lefschetz locus, namely the locus on which the isomonodromically deformed Higgs bundle remains graded, is precisely the maximal complex analytic subvariety of the base on which the real analytic section \(\sigma\) becomes holomorphic. 

\vspace{.5cm}

Similar rigidity holds for \textbf{nilpotent Higgs bundles}. Let $\sigma(0) \in\mathcal M_{\mathrm{Dol}}(X)$ be a nilpotent Higgs bundle. One may ask whether on the complex analytic subvariety $U\subset S$ passing through $0$ such that $\sigma|_U$ is holomorphic, the restriction $\sigma|_U$ still represents nilpotent Higgs bundles? Hu, Sun, Yang and Zuo \cite[Theorem 1.10]{HSYZ} prove that this holds when $\sigma(0)$ is generically regular nilpotent, which gives a partial result on this rigidity.

One can also explore the {\bf distribution} of nilpotent Higgs bundles along the isomonodromic deformation over the Teichm\"uller space $\mathcal T_g$:
\begin{conjx} We say the isomonodromic section $\sigma:\mathcal T_g\to \mathcal M_{\mathrm{Dol}}(\mathcal X/\mathcal T_g)$ has nilpotent loci $\mathcal N\subset \mathcal T_g$ if $\mathcal N$ is a union of all maximal irreducible real analytic subvarieties $S_i\subset \mathcal T_g,\ i\in I$, such that $\sigma|_{S_i}$ is always nilpotent. We expect that the family $\{S_i\}_{i\in I}$ is {\bf locally finite} in
$\mathcal T_g$; that is, for every $s\in\mathcal T_g$, there exists a
Euclidean open neighborhood $U$ of $s$ such that $\{i\in I : U\cap S_i\neq\varnothing\}$ is a finite set.
\end{conjx}

\subsection{Global holomorphicity fails in general} Let $\mathcal T_g$ be the Teichm\"uller space of genus $g\geq 2$ compact Riemann surfaces and let $\mathcal X/\mathcal T_g$ be its universal family. Since $\mathcal T_g$ is contractible, any isomonodromic section $\sigma:\mathcal T_g\to \mathcal M_{\mathrm{Dol}}(\mathcal X/\mathcal T_g)$ is well-defined. We call $\sigma$ \textbf{globally holomorphic} if it is a holomorphic section over $\mathcal T_g$. Now we focus on the {\bf non-existence} of globally holomorphic isomonodromic section, which was first considered in \cite{biswas}.

\begin{thmx}\label{main B} For a generic Higgs bundle $(E,\bar\p,\theta)$ with $\mathrm{rank}\geq2$ in $\mathcal M_{\text{Dol}}(\mathcal X/\mathcal T_g)$, its isomonodromic deformation fails to be holomorphic along any direction of $\mathcal T_g$. 
\end{thmx}

We note that Theorem \ref{main B} corresponds to the claim in \cite[Theorem 3.3]{biswas}.  Their proof relies heavily on \cite[Theorem 1.6]{Tos}. Here, we present a
different and simpler proof in Section~\ref{sec_generic} utilizing the criterion Theorem \ref{thm-A} (ii) alongside properties of
the associated spectral curve.

We also establish non-existence results for low-rank non-unitary Higgs bundles, specifically for ranks 2 and 3.

\begin{thmx}\label{main C}(i) For any rank 2 polystable non-unitary Higgs bundle $(E,\bar\p,\theta)$, its isomonodromic deformation fails to be holomorphic over $\mathcal T_g$.\\
(ii) For any rank 3 stable non-unitary Higgs bundle $(E,\bar\p,\theta)$, its isomonodromic deformation fails to be holomorphic over $\mathcal T_g$  (when $g\geq 3$, this is also true for any rank 3 polystable non-unitary Higgs bundle).
\end{thmx}

Theorem \ref{main C} (i) was first stated in \cite[Theorem 3.4]{biswas}. However, by applying the criterion Theorem \ref{thm-A} (ii), our approach offers a simpler proof. Indeed, we show in Section~\ref{sec_rk2} and Section~\ref{sec_rk3}
 that $\theta_\ast$ is always non-zero under the assumptions of Theorem \ref{main C}. 

Motivated by Theorem \ref{main B} and Theorem \ref{main C}, we make the following conjecture:
\begin{conjx}\label{conj-F}For $g\geq 3$ and a non-nilpotent Higgs bundle $(E,\bar\p,\theta)$ in $\mathcal M_{\text{Dol}}(\mathcal X/\mathcal T_g)$, its isomonodromic deformation fails to be holomorphic over $\mathcal T_g$.
\end{conjx}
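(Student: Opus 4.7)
By the criterion \eqref{holcri} and the fact that the Kodaira--Spencer map $\rho_{KS}$ for the universal family $\mathcal X/\mathcal T_g$ is an isomorphism, the conjecture is equivalent to the statement that, for every non-nilpotent polystable $\SL$-Higgs bundle $(E, \bar\p, \theta)$ on $X_0$, the map
$$\theta_* : H^1(X_0, T_{X_0}) \to \Hy$$
is non-zero. My plan is to prove this by explicitly computing the Serre-dual map $\theta_*^\vee$ and exhibiting a class in $\Hy$ on which it does not vanish.

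In the first step I would show that $\theta_*^\vee : \Hy \to H^0(X_0, K_{X_0}^2)$ admits the Dolbeault-level formula $(a, b) \mapsto \mathrm{tr}(\theta\, b)$. The integrability condition $\bar\p b + [\theta, a] = 0$ together with the cyclicity of the trace yields $\bar\p\, \mathrm{tr}(\theta b) = -\mathrm{tr}(\theta[\theta, a]) = 0$, and a coboundary $(\bar\p\phi, [\theta, \phi])$ contributes $\mathrm{tr}(\theta[\theta,\phi]) = 0$, so the formula descends to $\Hy$. Up to a nonzero scalar, $\theta_*^\vee$ is then the differential at $(E, \theta)$ of the second Hitchin invariant $h_2 : (E,\theta) \mapsto \mathrm{tr}(\theta^2) \in H^0(X_0, K_{X_0}^2)$. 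Consequently, when $\mathrm{tr}(\theta^2) \neq 0$ the argument concludes at once: the scaling class $(0, \theta) \in \Hy$ (integrable because $\bar\p\theta = 0$) satisfies $\theta_*^\vee(0, \theta) = \mathrm{tr}(\theta^2) \neq 0$. This already covers every non-nilpotent rank-$2$ Higgs bundle, where non-nilpotency forces $\mathrm{tr}(\theta^2) \neq 0$, as well as every higher-rank Higgs bundle whose first Hitchin invariant does not vanish.

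The principal difficulty is the residual case $\mathrm{tr}(\theta^2) = 0$ with $\theta$ still non-nilpotent, which can only occur when $\mathrm{rank}(E) \geq 3$ (for instance $\theta = \mathrm{diag}(\lambda_1,\lambda_2,\lambda_3)\om$ with $\sum\lambda_i = \sum\lambda_i^2 = 0$ and all $\lambda_i \neq 0$). To handle it I would factor $\theta_*$ through the centralizer sheaf $Z(\theta) := \ker\bigl(\tad : \End E \to \End E\otimes K_{X_0}\bigr)$. Since $[\theta, \theta(v)] = 0$ for every $v \in T_{X_0}$, the map $\theta : T_{X_0} \to \End E$ factors through $Z(\theta)$, and a direct \v{C}ech computation identifies $\theta_*$ with the composition
$$H^1(X_0, T_{X_0}) \to H^1(X_0, Z(\theta)) \hookrightarrow \Hy,$$
where the second arrow is injective: if $(\{c_{ij}\}, \{0\})$ is a coboundary in $\Hy$, then any trivializing cochain $\{s_i\} \subset \End E$ must satisfy $\tad(s_i) = 0$, hence $s_i \in Z(\theta)$, so $\{c_{ij}\}$ was already a coboundary in $H^1(X_0, Z(\theta))$. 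The conjecture therefore reduces to non-vanishing of the Serre-dual map $H^0(X_0, Z(\theta)^\vee \otimes K_{X_0}) \to H^0(X_0, K_{X_0}^2)$ induced by $T_{X_0} \hookrightarrow Z(\theta)$. On the open dense regular locus of $\theta$, the BNR spectral correspondence identifies $Z(\theta) \cong \pi_* \mathcal O_\Sigma$ for the spectral cover $\pi : \Sigma \to X_0$, and the desired section of $Z(\theta)^\vee \otimes K_{X_0}$ should emerge from natural geometric data on $\Sigma$. The genuine technical obstacle---and what I expect to be the principal hurdle---is to carry this argument through when the spectral curve of $(E,\theta)$ is highly singular or non-reduced, and to accommodate polystable-but-not-stable $(E,\theta)$, whose decomposition into simple summands introduces additional reducibility in both $E$ and $Z(\theta)$; this will require extending the spectral analysis via Beauville--Narasimhan--Ramanan's refinements for non-integral covers, together with a careful summand-wise argument.
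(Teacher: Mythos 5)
First, be aware that the statement you are proving is stated in the paper only as a \emph{conjecture}: the paper establishes it for Higgs bundles with smooth spectral curve (Proposition \ref{inje}) and for ranks $2$ and $3$ (Propositions \ref{rk2theta} and \ref{rk3}), and leaves the general case open. Within that context, the parts of your proposal that are actually carried out are correct and essentially reproduce the paper's partial results. The reduction to the non-vanishing of $\theta_\ast$ is the right one. Your second step is a clean repackaging of the paper's ``Case I'' arguments: pairing $\theta_\ast[\eta]=[(0,\eta(\theta))]$ against the scaling class $(\theta,0)\in\Hy$ under the self-duality of $\Hy$ recovers $[\eta\lrcorner\,\mathrm{tr}(\theta^2)]$, so $\mathrm{tr}(\theta^2)\neq0$ forces $\theta_\ast\neq0$ via the perfect pairing $H^1(T_{X_0})\times H^0(K_{X_0}^{\otimes2})\to H^1(K_{X_0})$; this disposes of all non-nilpotent rank $2$ bundles and any higher-rank bundle with $\mathrm{tr}(\theta^2)\neq 0$. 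Your factorization of $\theta_\ast$ through $H^1(\Ker(\tad))$, together with the injectivity of $H^1(\Ker(\tad))\hookrightarrow\Hy$, is exactly Lemma \ref{H1Ker}.

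The gap is your third step, and it is not a removable technicality: the case $\mathrm{tr}(\theta^2)=0$ with $\theta$ non-nilpotent is precisely the open content of the conjecture, and ``the desired section should emerge from natural geometric data on $\Sigma$'' is a hope, not an argument. Concretely: (a) the identification $\Ker(\tad)\cong\pi_\ast\mathcal O_{\Sigma_\theta}$ holds only where $\theta$ is regular; off the regular locus the centralizer sheaf is strictly larger, so $H^1(\Ker(\tad))$ cannot be computed from the spectral curve alone. (b) Even when the identification holds globally, for singular $\Sigma_\theta$ the inclusion $T_{X_0}\hookrightarrow\pi_\ast\mathcal O_{\Sigma_\theta}$ need not split, and one must bound $h^0$ of the cokernel against $h^1(T_{X_0})=3g-3$. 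This is exactly what the paper does by hand in the one residual case it can handle --- the rank $3$ cyclic cover with $\det\theta\neq0$, via the normalization and \cite{EV} --- and that computation has no evident extension to arbitrary rank, to non-reduced spectral curves (which do occur for non-nilpotent $\theta$ with $\mathrm{tr}(\theta^2)=0$ once the rank is at least $4$, where the BNR correspondence in the form you invoke is unavailable), or to polystable non-stable bundles. Until that case is closed you have reproved the known partial results, not the conjecture; if you want to push further, the concrete open problem is to control $H^1$ of the cokernel of $T_{X_0}\to\Ker(\tad)$ for singular, possibly non-reduced, spectral data.
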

This conjecture may be viewed as a Dolbeault-side analogue of the following theorem on representations of surface groups \cite[Theorem~1.2.1]{LL}:
\begin{theorem*}[Landesman--Litt]
Let $\Sigma_g$ be a smooth oriented surface of genus $g$ and 
\[
\rho:\pi_1(\Sigma_{g})\longrightarrow\mathrm{GL}_n(\mathbb C)
\]
be a representation which has finite orbits under the mapping class group of $\Sigma_g$ (called \textbf{MCG-finite}). If $n=\mathrm{rank}\ \rho <\sqrt{g+1},$ then $\rho$ has finite image. This result also holds for punctured surfaces.
\end{theorem*}A MCG-finite semisimple representation corresponds to an algebraic isomonodromic deformation after a finite base change. Hence MCG-finite implies the holomorphicity as an isomonodromic deformation of the Higgs bundle. The numerical condition in their theorem is indispensable: as explained in \cite[Section~10]{LL}, there are MCG-finite representations of large rank with infinite image. Those examples are both $\mathbb C$-PVHS and hence nilpotent as Higgs bundles. 

Conjecture~\ref{conj-F} is equivalent to the statement that global holomorphicity implies nilpotency. Hence it also takes the above examples with large rank into consideration. Using Theorem~\ref{thm-A}(ii) together with a Clifford-type estimate, we obtain the following numerical result.
\begin{thmx}\label{thm-G}
Let $(E,\bar\p,\theta)$ be a rank-$n$ polystable Higgs bundle on a compact Riemann surface of genus $g\geq2$, and let $\sigma$ be its isomonodromic deformation over $\mathcal T_g$.
\begin{itemize}
\item[(i)] If $g>(n-1)^2$ and $(E,\bar\p,\theta)$ is non-unitary, then $\sigma$ is not globally holomorphic.
\item[(ii)] If $g>(n-2)^2+1$ and $\theta$ is non-nilpotent, then $\sigma$ is not globally holomorphic.
\item[(iii)] Moreover, if the spectral curve of $\theta$ is reduced, $g\geq n$, and $(E,\bar\p,\theta)$ is non-unitary, then $\sigma$ is not globally holomorphic.
\end{itemize}
\end{thmx}

The proof of Theorem~\ref{thm-G} will be given in Section~\ref{sec_globolhol}. As explained in Section~\ref{sec_counterexample}, Conjecture~\ref{conj-F} fails in genus two, so we assume $g\geq3$. 

\vspace{.3cm}

As an arithmetic consequence of Theorem~\ref{thm-G}(ii), we obtain the following finite-monodromy statement.

\begin{corx}\label{cor-H}
Let $K\subset \mathbb C$ be a number field with ring of integers $\mathcal O_K$, and let
\[
\rho\colon \pi_1(\Sigma_g)\longrightarrow \mathrm{GL}_n(\mathcal O_K)
\]
be a semisimple representation. Assume that $g>(n-1)^2$.
If the isomonodromic deformations of the Higgs bundles associated with
all Galois conjugates of $\rho$ are globally holomorphic, then $\rho$
has finite image.
\end{corx}
\begin{proof}By Theorem~\ref{thm-G}(i), every Galois conjugate of $\rho$ is unitary.
Consequently, the image of $\rho$ is bounded under every archimedean
embedding of $K$. Also, the image of $\rho$ in the product of all archimedean realizations is
discrete. A discrete and bounded subset is finite, and hence $\rho$ has
finite image.
\end{proof}

\vspace{.5cm}

\noindent\textbf{Acknowledgements:}
H\'el\.ene Esnault and Moritz Kerz previously asked whether the anti-holomorphic component discussed in \cite[Theorem A (ii)]{HSZ}
admits such a cohomological expression. We are grateful to them for highlighting this perspective. 
We also would like to thank Esnault and Kerz for drawing our attention to the paper \cite{biswas}. We would like to thank Xin L\"u, Takuro Mochizuki, Sheng Rao, Richard Wentworth, Jinbang Yang and Runze Zhang for their  helpful suggestions and discussions.

\section{Preliminaries}
Now we review some basic theory and notation. For a background on Hodge theory and Higgs bundles, one may refer to \cite{CMP}. For a survey paper on harmonic maps and Higgs bundles, one may refer to \cite{Li}.
\subsection{Local system and vector bundle}
Let \( X_0 \) be a compact Riemann surface of genus $g\geq 2$ with a K\"ahler form $\omega_0$. For any holomorphic (smooth) vector bundle $E$ over $X_0$, let $\mathcal A^{p,q}(E)$ be the space of smooth $(p,q)$ sections of $E$ on $X_0$.

\hfill

\noindent
\textbf{Flat bundle}: a tuple \( (E, D) \), where \( E \) is a holomorphic vector bundle on $X_0$ with a flat holomorphic connection $D:\mathcal O(E)\to \mathcal O(E\otimes  K_{X_0})$. After tensoring $C^{\infty}(X_0)$, $(E,D)$ is a smooth flat vector bundle with a flat connection $D:\mathcal A^0(E)\to\mathcal A^1(E)$. In this paper, a flat bundle refers to a smooth flat bundle unless otherwise stated.

\hfill

\noindent
\textbf{$\mathbb C$-local system}: a locally constant sheaf over $X_0$ with stalk \( V \), where $V$ is a complex vector space. For a local system \( \mathcal V\) over \( X_0 \), $ \mathcal V \otimes_{\mathbb{C}} \mathscr C^{\infty}_{X_0}$ is a locally free sheaf of \( \mathscr C^{\infty}_{X_0} \)-modules, which is a flat smooth vector bundle. Similarly, $\mathcal V \otimes_{\mathbb{C}} \mathcal{O}_{X_0}$ is a locally free sheaf of \( \mathcal{O}_{X_0} \)-modules, which is a flat holomorphic vector bundle.  

\hfill

\noindent
\textbf{Chern connection}: given a holomorphic vector bundle \( (E, \bar\p) \) with a Hermitian metric \( h \), there exists a unique unitary connection \( D_{h} \) such that \( D_{h}^{0,1} = \bar\p \), which is called the Chern connection. Let $F(D_h):=D_h\circ D_h$ be the Chern curvature, which is a (1,1) form of $\mathrm{End}E$.

\hfill

\noindent
\textbf{Hermitian metric on \( \mathcal A^1(\mathrm{End} E) \)}: let $(E,h)$ be a Hermitian vector bundle. For any \( \varphi, \psi \in \mathcal A^0(\mathrm{End} E) \), $\alpha,\beta\in \mathcal A^1(X_0)$  
\[
\langle \varphi \otimes \alpha, \psi \otimes \beta \rangle := \mathrm{tr}(\varphi \psi^{\star_{h}}) \cdot \Lambda_{\omega_0}(\alpha \wedge *\beta)
\]  
where $*$ is the Hodge star operator with respect to $\omega_0$, $\star_h$ is the Hodge star operator with respect to $h$, and $\Lambda_{\omega_0}$ is the contraction by $\omega_0$.

\subsection{Harmonic metric}
Given a flat vector bundle \( (E, D) \), and any Hermitian metric $h$ on it,  
we have the following unique decomposition
\[
D = D_h + \psi_h
\] 
where \( D_h \) is unitary, and \( \psi_h \) is self-adjoint, decomposed by type (1,0) and (0,1):  
\begin{align*}
    D  = D_h + \psi_h  = D_h^{1,0} + D_h^{0,1} + \theta + \theta^{\star_h}.
\end{align*}

Define the energy functional 
\[
E(h) := \int_{X_0} \langle \psi_h, \psi_h \rangle \, \omega_0.
\]  

If \( h \) is a minimal point of \( E(h) \), we call it a \textbf{harmonic metric}. Such $h$ satisfies the following equation
$$D_h^{\star_h}\psi_h=0.$$

If a harmonic metric $h$ on $(E,D)$ exists, then  
\( (E, D_h^{0,1}, \theta) \) is a Higgs bundle.

\begin{theorem*}[Donaldson, Corlette and Uhlenbeck-Yau]
    A semisimple flat bundle $(E,D)$ admits a harmonic metric. 
\end{theorem*}

\subsection{Higgs bundle}
A Higgs bundle over $X_0$ is a triple \( (E, \bar{\partial}, \theta) \), where \( E \) is a holomorphic vector bundle on $X_0$ with a holomorphic structure \( \bar{\partial} \), and \( \theta \in \mathcal A^{1,0}(\mathrm{End} E) \) satisfies \( \bar{\partial}\theta = 0 \) and \( \theta \wedge \theta = 0 \). (When $X_0$ is a compact Riemann surface, the condition $\theta\wedge\theta=0$ is automatically satisfied.)

We say $h$ is a harmonic metric for a Higgs bundle \( (E, \bar{\partial}, \theta) \) if $h$ satisfies the Hitchin-Yang-Mills equation \begin{align}\label{HYM}F(D_h)+[\theta, \theta^{\star_h}]=0.\end{align}

If a harmonic metric $h$ on \( (E, \bar{\partial}, \theta) \) exists, then $(E,D_h+\theta+\theta^{\star_h})$ is a flat bundle.
\begin{theorem*}[Hitchin, Simpson]
    A polystable Higgs bundle with vanishing Chern classes admits a harmonic metric. 
\end{theorem*}

\subsection{Moduli spaces and correspondences} One may refer to \cite{Simp92} and \cite{Simp94I}.
\hfill

\noindent $\mathcal{M}_\mathrm{B}(X_0)$: moduli space of semisimple representations of $\pi_1 (X_0)$ into $\mathrm{GL}(n,\mathbb C)$.

\hfill

\noindent
$\mathcal{M}_{\mathrm{dR}}(X_0)$: moduli space of semisimple $\mathrm{GL}(n,\mathbb C)$ flat vector bundles over $X_0$.

\hfill

\noindent
$\mathcal{M}_{\mathrm{Dol}}(X_0)$: moduli space of polystable $\mathrm{GL}(n,\mathbb C)$ Higgs bundles with vanishing Chern classes on $X_0$.

\hfill

\noindent
\textbf{Riemann-Hilbert correspondence}: we have the biholomorphic map
\begin{align*}
    \mathcal{M}_{\mathrm{B}}(X_0) & \to \mathcal{M}_{\mathrm{DR}}(X_0),\\ 
    [\rho:\pi_1(X_0)\to \mathrm{GL}(n,\mathbb C)] & \mapsto [\widetilde{X_0}\times_\rho \mathbb{C}^n],
\end{align*}
where $\widetilde{X_0}$ is the universal covering of $X_0$.

\hfill

\noindent
\textbf{Nonabelian Hodge correspondence}: we have the real analytic map
\begin{equation}\label{DUYHS}
\begin{aligned}
   \Psi: \mathcal{M}_{\mathrm{DR}}(X_0) & \to \mathcal{M}_{\mathrm{Dol}}(X_0),\\ 
    [(E, D)] & \mapsto [(E, D_h^{0, 1}, \theta)].
\end{aligned}\end{equation}

\subsection{Holomorphic tangent space of $\mathcal{M}_{\mathrm{Dol}}(X_0)$}\label{htdol}
Let $(E,\bar\p,\theta)\in \mathcal M_{\mathrm{Dol}}(X_0)$ be a smooth point. The holomorphic tangent space \( T_{(E, \bar{\partial}, \theta)} \mathcal{M}_{\mathrm{Dol}}(X_0) \) can be characterized by the hypercohomology $\Hy$. We shall use the \textbf{Dolbeault resolution} of the complex $\mathrm{End}E\stackrel{\tad}{\longrightarrow}\mathrm{End}E\otimes K_{X_0}$ to compute this hypercohomology. The Dolbeault resolution is 
\[
\begin{tikzcd}
C^{0,1}:=\Eahol \arrow[r, "{\tad}"]                                                   & C^{1,1}:={\mathcal A^{0,1}(\mathrm{End} E \otimes  K_{X_0})} \\
C^{0,0}:=\mathcal A^{0,0}(\mathrm{End} E) \arrow[r, "{\tad}"] \arrow[u, "\bar\partial"] & C^{1,0}:=\Ehol \arrow[u, "\bar\partial"]              
\end{tikzcd}
\]
Then we have the following truncated complex
\[C^{0,0} \xrightarrow{d^0} C^{1,0} \oplus C^{0,1} \xrightarrow{d^1}  C^{1,1},\]
where 
\begin{gather*}
    d^0(g) = ([\theta,g], \bar{\partial} g) \quad \text{for } \quad g \in C^{0,0},\\
    d^1(\varphi,\psi) = \bar{\partial} \varphi + [\psi,\theta] \quad \text{for } \quad (\varphi,\psi) \in C^{1,0} \oplus C^{0,1}.
\end{gather*}
Then we have the holomorphic tangent space
$$T_{(E, \bar{\partial}, \theta)} \mathcal{M}_{\mathrm{Dol}}(X_0)=\Hy=\frac{\mathrm{Ker\ }d^1}{\mathrm{Im\ }d^0}.$$

\subsection{Relative moduli spaces}
Let \( f: \mathcal{X} \to S \) be a smooth projective family of compact Riemann surfaces over \( S \) with central fiber $X_0$ over $0\in S$. Simpson constructed three relative moduli spaces in \cite{Simp94II}. They are the relative Betti moduli, the relative de Rham moduli, and the relative Dolbeault moduli: 

$$\mathcal{M}_\mathrm{B}(\mathcal{X}/S),\ \mathcal{M}_{\mathrm{DR}}(\mathcal{X}/S),\ \mathcal{M}_{\mathrm{Dol}}(\mathcal{X}/S),$$
whose fibers at $s\in S$ are
$$\mathcal{M}_\mathrm{B}(X_s),\ \mathcal{M}_{\mathrm{DR}}(X_s),\ \mathcal{M}_{\mathrm{Dol}}(X_s),\text{ respectively}.$$
By \cite[Proposition 7.18]{Simp94II}, we have the complex analytic homeomorphism \begin{align}\label{RH}\mathcal{M}_\mathrm{B}(\mathcal{X}/S)\cong \mathcal{M}_{\mathrm{DR}}(\mathcal{X}/S).\end{align}
And by \cite[Theorem 4.23]{CTW} we have the real analytic homeomorphism
$$ \Psi:\mathcal{M}_{\mathrm{DR}}(\mathcal{X}/S)\stackrel{\sim}{\longrightarrow}\Mdol.$$
\subsection{Isomonodromic deformation}\label{Isomd}
By Ehresmann's theorem, locally \( \mathcal{X}/S \) has a \( C^\infty \)-trivialization \( \mathcal{X}|_U \cong U \times X_0 \), where \( U \) is a small neighborhood of $0\in S$. Then for each \( s \in U \), this trivialization gives a diffeomorphism \( F_s : X_s \to X_0 \). To avoid the monodromy of the isomonodromic deformation, we shall assume $S$ is a germ of a positive-dimension variety near $0$.

For \( \rho \in \mathcal{M}_\mathrm B(X_0) \), the pull back \( F_s^* \rho \in \mathcal{M}_\mathrm B(X_s) \), which gives a section of \( \mathcal{M}_B(\mathcal{X}/S) \to S \), called the \textbf{isomonodromic deformation} of \( \rho \) over \( S \), denoted by the section
$$\tau:S\to \mathcal{M}_B(\mathcal{X}/S).$$

The Riemann-Hilbert correspondence gives a biholomorphic map \( \mathcal{M}_\mathrm B(X_s) \cong\mathcal{M}_{\mathrm{DR}}(X_s) \). Under this, we get a section of \( \mathcal{M}_{\mathrm{DR}}(\mathcal{X}/S) \to S \), called the \textbf{isomonodromic deformation of a flat vector bundle} $(\mathcal V,D)\in\mathcal{M}_{\mathrm{DR}}(X_0) $, still denoted by $\tau:S\to \mathcal{M}_{\mathrm{DR}}(\mathcal X/S).$
\begin{definition}(a) Recall the Nonabelian Hodge correspondence $$ \Psi:\mathcal{M}_{\mathrm{DR}}(X_s) \to \mathcal{M}_{\mathrm{Dol}}(X_s).$$ Under this real analytic correspondence, we get a real analytic section $$\sigma:=\Psi\circ \tau:S\to \mathcal{M}_{\mathrm{Dol}}(\mathcal{X}/S),$$ called the \textbf{isomonodromic deformation of a Higgs bundle} $\sigma(0)\in\mathcal M_{\mathrm{Dol}}(X_0).$\\
(b) The \textbf{isomonodromic foliation} of $\Mdol$ is a real analytic foliation generated by all its isomonodromic sections.
\end{definition}
We remark that the isomonodromic deformation can be defined for any smooth projective family of relative dimension $\geq1$.
\subsection{Non-abelian Gauss-Manin connection and non-abelian Kodaira-Spencer map}\label{naGMnaKS}
Let $\pi:\mathcal{M}_{\mathrm{DR}}(\mathcal X/S)\to S$ be the relative de Rham moduli over $S$. We define the non-abelian Gauss-Manin connection on $\mathcal{M}_{\mathrm{DR}}(\mathcal X/S)$ as a horizontal lifting (see \cite{CT} and \cite{FS})
\begin{align*}\nabla_{\mathrm{GM}}: \pi^\ast TS&\to T\mathcal{M}_{\mathrm{DR}}(\mathcal X/S)\\
\{(E,D),\frac{\p}{\p t}\}&\mapsto\tau_\ast(\frac{\p}{\p t}),
\end{align*}
where $(E,D)\in\mathcal M_{\mathrm{DR}}(X_s)$, $\frac{\p}{\p t}\in T_sS$, and $\tau:S\to\mathcal{M}_{\mathrm{DR}}(\mathcal X/S)$ is the isomonodromic deformation of $(E,D)\in\mathcal M_{\mathrm{DR}}(X_s)$ near $s\in S.$ By \eqref{RH}, $\nabla_{\mathrm{GM}}$ is a holomorphic connection. Additionally, $\nabla_{\mathrm{GM}}$ has zero curvature, meaning isomonodromic sections indeed define a holomorphic foliation on $\mathcal{M}_{\mathrm{DR}}(\mathcal X/S)$. Thus, $\mathcal{M}_{\mathrm{DR}}(\mathcal X/S)\to S$ is locally trivial over $S$. By \cite[Section 7]{Simp95}, this $\nabla_{GM}$ extends to $\mathcal M_{\mathrm{Del}}(\mathcal X/S)|_{S\times\mathbb G_m}$. Recall the non-abelian Kodaira-Spencer map $\Theta_{KS}$ defined in \eqref{Theta_ks} of section \ref{nabHT}. 

Now we provide a formula for $\theta_\ast$ using the Dolbeault representative (see \cite[Proposition 2.1]{Zuo} and \cite[Theorem 1.2]{FS}).
\begin{lemma}\label{thetaast}Let $\eta\in \mathcal A^{0,1}(T_{X_s})$ be a representative of $[\eta]\in H^1(X_s,T_{X_s})$. Then $$\theta_\ast([\eta])=[(0,\eta(\theta))]\in\mathbb H^1(X_s,(\mathrm{End}E,\mathrm{ad}(\theta))),$$
where $\eta(\theta)\in \mathcal A^{0,1}(\mathrm{End}E)$ is a contraction by vector fields.
\end{lemma}

\subsection{Betti map and isomonodromic deformation}\label{Bettimap}
Assume our base $S$ is a germ of a variety. Let $\mathcal A\to S$ be an abelian scheme of relative dimension $g$. We can find a basis of relative holomorphic 1-forms of $\mathcal A\to S$, denoted by $\{\omega_1(s),\cdots,\omega_g(s)\}_{s\in S}$. Since all fibers $\mathcal A_s$ have the same topological type, we can take $\gamma_1,\cdots,\gamma_{2g}$ as the common set of generators of their fundamental groups.
\begin{definition}[\cite{CMZ} and \cite{CGHX}] (a) For any $\xi\in \mathcal A_s$, its coordinates $\{\int_0^\xi\omega_i(s)\}_{i=1}^g$ are well defined modulo $\bigoplus\limits_{j=1}^{2g}\mathbb Z\{
\int_{\gamma_j}\omega_i(s)\}_{i=1}^g $, and clearly we have $(b_1(\xi),\cdots,b_{2g}(\xi))\in \mathbb R^{2g}/\mathbb Z^{2g}$, such that $\int_0^\xi=\sum_{j=1}^{2g}b_j(\xi)\int_{\gamma_j}.$ We define the \textbf{Betti map} by 
\begin{align*}b:\mathcal A&\to\mathbb R^{2g}/\mathbb Z^{2g}\\
\xi&\mapsto(b_1(\xi),\cdots,b_{2g}(\xi)),\end{align*}
which is a real analytic map. \\
(b) We say a section $\alpha:S\to \mathcal A$ is a level set of $b$ if $b(\alpha(s))$ is a constant map. Through any $\xi\in\mathcal A_s$, there is a unique level set of $b$ passing through $\xi$. Thus all level sets of $b$ define a real analytic foliation on $\mathcal A$, called the \textbf{Betti foliation}.
\end{definition}
Since $\mathbb R/\mathbb Z\cong \mathrm{U}(1)$, for each $\xi\in \mathcal A_s$, $b(\xi)$ determines an element $\rho_\xi$ in $\mathrm{Hom}(\pi_1(\mathcal A_s),\mathrm{U}(1))=\mathcal M_{\mathrm B}(\mathcal A_s,\mathrm U(1))$ by mapping 
$$\gamma_j\to b_j(\xi),\ j=1,2,\cdots,2g.$$
So we have the following real analytic homeomorphism 
\begin{equation}\label{Bet}\begin{aligned}\mathcal B:\mathcal A&\to\mathcal M_{\mathrm B}(\mathcal A/S,\mathrm{U}(1))\\
\xi&\mapsto\rho_\xi.\end{aligned}\end{equation}
By fixing the generators of $\pi_1(\mathcal A_s),\ \text{for any } s\in S$, $\mathcal M_{\mathrm B}(\mathcal A/S,\mathrm{U}(1))$ is real analytically homeomorphic to $S\times \mathbb R^{2g}/\mathbb Z^{2g}$. Then $b$ and $\mathcal B$ are equivalent via this trivialization.\\
\textbf{Observation:} By the above definition, if $\alpha:S\to\mathcal A$ is a level set of $b$, then $\rho_{\alpha(\cdot)}:S\to\mathcal  M_{\mathrm B}(\mathcal A/S,\mathrm{U}(1))$ is an isomonodromic section. Conversely, any isomonodromic section of $\mathcal M_{\mathrm B}(\mathcal A/S,\mathrm{U}(1))$ defines a level set of the Betti map $b$.

Therefore, the isomonodromic deformation in the relative Betti moduli is a generalization of the level set of the Betti map. 

If $\mathcal X/S$ is a smooth family of compact Riemann surfaces of genus $g\geq2$, then we consider the abelian scheme $\mathrm{Jac}(\mathcal X/S)\to S$. In this case, the map 
$$\mathcal B:\mathrm{Jac}(\mathcal X/S)\to \mathcal M_{\mathrm B}(\mathrm{Jac}(\mathcal X/S)/S,\mathrm{U}(1))=\mathcal M_{\mathrm B}(\mathcal X/S,\mathrm{U}(1))$$ defined in \eqref{Bet} is simply the Nonabelian Hodge correspondence restricted to
$$\mathrm{Jac}(\mathcal X/S)=\mathcal M_{\mathrm {Dol}}(\mathcal X/S,\mathrm{U}(1))\subsetneq \Mdol.$$

Therefore, a level set of the Betti map $b$ on $\mathrm{Jac}(\mathcal X/S)$ is exactly an isomonodromic section of $\mathcal M_{\mathrm {Dol}}(\mathcal X/S,\mathrm{U}(1))$. The Betti foliation coincides with the isomonodromic foliation on $\mathcal M_{\mathrm {Dol}}(\mathcal X/S,\mathrm{U}(1))$.
\section{First order deformation theory for a Higgs bundle}\label{DT}
Let $f:\mathcal X\to S$ be a smooth projective family of compact Riemann surfaces with central fiber $X_0:=f^{-1}(0)$, for $0\in S$. We consider a real analytic family of Higgs bundles over $\mathcal X$ such that for each $s\in S$, this family restricted to $X_s$ is a Higgs bundle on $X_s$.

We have the relative Dolbeault moduli space $p:\Mdol\to S$. We may view a real analytic (or holomorphic) family of Higgs bundles as a real analytic (or holomorphic) section $\sigma: S\to\Mdol$ such that $p\circ\sigma=\mathrm {id}_S$. In the following, we always assume $$\sigma(0)=(E,\bar\p,\theta)\in \Mdol|_0=\mathcal M_{\mathrm{Dol}}(X_0).$$

\begin{lem}\label{realsec}
When $\sigma: S\to\Mdol$ is a real analytic section, we consider its holomorphic and anti-holomorphic derivative along $v\in T_0S\ (=T_0^{1,0}S)$, i.e.,
$$\sigma_\ast(v)=w^{1,0}\oplus w^{0,1}\in T^{1,0}_{\sigma(0)}\Mdol\oplus T^{0,1}_{\sigma(0)}\Mdol.$$
Then $$p_\ast(w^{1,0})=v\text{ and }p_\ast(w^{0,1})=0.$$ In other words, the anti-holomorphic derivative of $\sigma$ along $v$ factors through $T_{\sigma(0)}^{0,1}\mathcal M_{\mathrm{Dol}}(X_0)$. Clearly, when $w^{0,1}=0$, $\sigma$ is holomorphic along $v\in T_0S$.
\end{lem}
\begin{proof} Since $p$ is holomorphic, $p_\ast$ maps $T^{1,0}_{\sigma(0)}\Mdol$ to $T^{1,0}_0S$ and $T^{0,1}_{\sigma(0)}\Mdol$ to $T^{0,1}_0S$. Thus $$(p\circ \sigma)_\ast(v)=p_\ast(w^{1,0})+p_\ast(w^{0,1})\in T^{1,0}_0S\oplus T^{0,1}_0 S.$$ Also, $(p\circ \sigma)_\ast(v)=(\mathrm {id}_S)_\ast(v)=v\in T^{1,0}S$. So $p_\ast(w^{0,1})=0$.
\end{proof}

Later, we shall provide explicit deformation classes for $w^{1,0}$ and $w^{0,1}$, respectively.
\subsection{Deformation theory for a compact Riemann surface}\label{Rie}
In this section, we briefly review some basic deformation theory. Let $f:\mathcal X\to S$ be a family as above with central fiber $X_0:=f^{-1}(0)$, for $0\in S$. Then we have the Kodaira-Spencer map
\begin{align}\label{KS}
\rho_{KS}: T_0S\to H^1(X_0,T_{X_0})
\end{align}

We use the Dolbeault cohomology to represent any class $[\eta]\in H^1(X_0,T_{X_0})$ as follows:
$$H^1(X_0,T_{X_0})=\frac{\mathcal A^{0,1}(T_{X_0})}{\bar\p \mathcal A^{0,0}(T_{X_0})}.$$

Let $\frac{\p}{\p t}\in T_0S$ such that $\rho_{KS}(\frac{\p}{\p t})=[\eta]$.
We may pick some $\eta\in \mathcal A^{0,1}(T_{X_0})$ to represent $[\eta]\in H^1(X_0,T_{X_0})$. 

Now we assume $X_t:=f^{-1}(t)$ is sufficiently close to the central fiber with the complex structure $\mu_t:=t\eta+O(t^2)\in \mathcal A^{0,1}(T_{X_0})$ on $X_t$. We view $X_t$ as the differential manifold $X_0$ endowed with the complex structure $\mu_t$. Then the identity map $$\mathrm{id}:X_0\to (X_0,\mu_t)$$
is a diffeomorphism.

We let $dz$ be a local $(1,0)$ frame on $X_0$ and $d\bar z$ its conjugate. Then $$\om_t:=dz-t\eta(dz)+O(t^2)$$ is a local $(1,0)$ frame for $X_t$, where $\eta(dz)\in\mathcal A^{0,1}(X_0)$ is the contraction.

\subsection{Deformation class of a holomorphic family of Higgs bundles}

In this section, we assume $\sigma: S\to\Mdol$ is a holomorphic section and $\sigma(0)=(E,\bar\p,\theta)\in \mathcal M_{\mathrm{Dol}}(X_0)$. 

We will use the theory of the Atiyah bundle to provide the first-order deformation class of a Higgs bundle in $\Mdol$. One may refer to \cite{Ati} for the definition of the Atiyah class. In \cite[Proposition 4.2.1]{CT}, the author used this theory to describe the deformation class of a triple $(X_0,P,\nabla)$, where $P$ is a $G$-principal bundle on $X_0$ with a holomorphic connection $\nabla$.

For the initial Higgs bundle $(E,\bar\p,\theta)\in \mathcal M_{\mathrm{Dol}}(X_0)$, let $P\stackrel{j}{\to}X_0$ be its frame bundle, which is a $\mathrm{GL}(n,\mathbb C)$-principal bundle. Then we have the following short exact sequence $$0\to T_{P/X}\to T_P\to j^*T_{X_0}\to 0.$$
$G:=\mathrm{GL}(n,\mathbb C)$ acts on both tangent spaces. After taking a quotient by $G$, we have 
\begin{align}\label{at} 0\to \mathrm{ad}P\to \At\to T_{X_0}\to 0.\end{align}
We remark that $\mathrm{ad}P=\mathrm{End}E$ and $\At\cong \mathrm{ad}P\oplus T_{X_0}$ as a smooth vector bundle. 
An Atiyah class is an extension class $$a(E)\in\mathrm{Ext}^1(T_{X_0},\mathrm{End}E)\cong H^1(X,\mathrm{End}E\otimes  K_{X_0}).$$ 

Let $h_0$ be the harmonic metric for $E$ and $\Ch$ be the Chern connection with $\Ch=\Ch^{1,0}+\bar\p$. In our case, by \cite[Proposition 4]{Ati}, we may take 
$$a(E):=-\theta\wedge\theta^{\star_{h_0}}-\theta^{\star_{h_0}}\wedge\theta,$$
which is equal to $F(\Ch)$ by the Hitchin-Yang-Mills equation \eqref{HYM}.
This $a(E)$ defines a complex structure on $\At$ by $$\bar\p_{\At}:=\begin{pmatrix}\bar\p_{\mathrm{End}E} & a(E)\\ 0 &\bar\p_{T_{X_0}}\end{pmatrix}.$$

Let $\theta\lrcorner (\cdot):\mathcal A^{p,q}(T_{X_0})\to \mathcal A^{p,q}(\mathrm{End}E)$ be the contraction map. The operator $\Ch^{1,0}(\theta\lrcorner)$ for any smooth (local) section $\tau$ of $T_{X_0}\otimes\mathcal A^{p,q}(X_0)$ is defined by first performing the contraction $\theta\lrcorner\tau$ and then taking $\Ch^{1,0}(\theta\lrcorner \tau).$

\begin{lemma}\label{2termcx} We have the following 2-term complex 
\begin{align}\label{2-tcx}
\At\xrightarrow{\tad\oplus \Ch^{1,0}(\theta\lrcorner)}\mathrm{End}E\otimes K_{X_0},
\end{align}
which gives the following extension of two deformation complexes
\[\begin{tikzcd}[column sep=2cm]\mathrm{End}E\arrow[r,"\tad"]\arrow[d]&\mathrm{End}E\otimes K_{X_0}\arrow[d,equals]\\
\At \arrow[r,"\tad\oplus \Ch^{1,0}(\theta\lrcorner)"]\arrow[d]& \mathrm{End}E\otimes K_{X_0}\arrow[d]\\T_{X_0}\arrow[r,"0"]&0
\end{tikzcd}\]
\end{lemma}
\begin{proof} As a smooth map, $\tad\oplus \Ch^{1,0}(\theta\lrcorner):\At\cong  \mathrm{End}E\oplus T_{X_0}\to \mathrm{End}E\otimes K_{X_0}$ is well-defined. We verify it is indeed holomorphic, i.e., the following diagram commutes:
\begin{equation}\label{reso}\begin{tikzcd}[column sep=2cm]
\mathcal A^{0,1}(\At)\arrow[r,"\tad\oplus (-\Ch^{1,0}(\theta\lrcorner))"] & \mathcal A^{0,1}(\mathrm{End}E\otimes K_{X_0}) \\ \mathcal A^{0,0}(\At)\arrow[r,"\tad\oplus \Ch^{1,0}(\theta\lrcorner)"]\arrow[u,"\bar\p_{\At}"]&\mathcal A^{0,0}(\mathrm{End}E\otimes K_{X_0})\arrow[u,"\bar\p"]
\end{tikzcd}\end{equation}
For any $(g,\tau)\in \mathcal A^{0,0}(\At)=\mathcal A^{0,0}(\mathrm{End}E)\oplus \mathcal A^{0,0}(T_{X_0}),$ we have 
\begin{align*}\{\tad\oplus (-\Ch^{1,0}(\theta\lrcorner))\}\circ\bar\p_{\At}(g,\tau)=&\tad(\bar\p g)-\Ch^{1,0}(\theta\lrcorner(\bar\p\tau))+\tad\circ a(E)\tau;\\
\bar\p\circ\{\tad\oplus \Ch^{1,0}(\theta\lrcorner)\}(g,\tau)=&\bar\p(\tad g)+\bar\p\Ch^{1,0}(\theta\lrcorner\tau).
\end{align*}
Note that $$\Ch^{1,0}(\theta\lrcorner(\bar\p\tau))+\bar\p\Ch^{1,0}(\theta\lrcorner\tau)=[F(\Ch),\theta\lrcorner\tau]=\tad\circ a(E).$$
We have the desired holomorphic property. The rest is clear by definition.
\end{proof}
Now we present the main result of this section.
\begin{proposition}\label{holtan}Under the assumption $T_0S=H^1(X_0,T_{X_0})$, the hypercohomology group $\mathbb H^1(\At, \tad\oplus \Ch^{1,0}(\theta\lrcorner))$ of \eqref{2-tcx} gives the first order deformation class of $(E,\bar\p,\theta)\in\Mdol$, i.e., \begin{align}\label{Holtan}T_{(E,\bar\p,\theta)}\Mdol=\mathbb H^1(\At, \tad\oplus \Ch^{1,0}(\theta\lrcorner)).\end{align} We also have the exact sequence
\begin{align}\label{shortH}0\to \Hy\to \mathbb H^1(\At, \tad\oplus \Ch^{1,0}(\theta\lrcorner))\stackrel{j_\ast}{\rightarrow}H^1(X_0,T_{X_0})\to 0
.\end{align}
\end{proposition}
\begin{proof}Consider the truncated complex of \eqref{reso}, we have
\begin{align*}\mathcal A^{0,0}(\At)\xrightarrow{D^0}\mathcal A^{0,1}(\At)\oplus \mathcal A^{0,0}(\mathrm{End}E\otimes K_{X_0})\xrightarrow{D^1}\mathcal A^{0,1}(\mathrm{End}E\otimes K_{X_0}),\\
\text{where\ } D^0=(\bar\p_{\At},\tad\oplus \Ch^{1,0}(\theta\lrcorner));\ D^1:=\{\tad\oplus (-\Ch^{1,0}(\theta\lrcorner))\}\oplus\bar\p.
\end{align*}
This truncated complex \textbf{computes} the hypercohomology group in \eqref{Holtan}. So any class in $\mathbb H^1(\At, \tad\oplus \Ch^{1,0}(\theta\lrcorner))$ can be represented by a $D^1$-closed cocycle \begin{align*}(\varphi,\psi,\eta)\in &\Ehol\oplus\Eahol\oplus \mathcal A^{0,1}(T_{X_0})\\&=\mathcal A^{0,0}(\mathrm{End}E\otimes K_{X_0})\oplus\mathcal A^{0,1}(\At),\end{align*}
denoted by $[(\varphi,\psi,\eta)]\in \mathbb H^1(\At, \tad\oplus \Ch^{1,0}(\theta\lrcorner)).$

We give an elementary proof of \eqref{Holtan}. 

For any $t\in S$, let $(E,\bar\p_t,\theta_t)$ be the Higgs bundle given by $\sigma(t)$ over $X_t$. As in the previous section, $\mathrm{id}:X_0\to X_t$ is a diffeomorphism, and we use it to pull back $(E,\bar\p_t,\theta_t)$ to a triple on $X_0$. One tries to compare the pull-back triple $(E,\bar\p_t,\theta_t)$ with $(E,\bar\p,\theta)$ on $X_0$.\\
\textbf{Claim:} There exists $(\varphi,\psi,\eta)\in \Ehol\oplus\Eahol\oplus \mathcal A^{0,1}(T_{X_0})$ such that
\begin{align*}X_t\text{ has } &t\eta+O(t^2)\in \mathcal A^{0,1}(T^{1,0}_{X_0})\text{ as its complex structure};\\
\bar\p_t=&\bar\p+t\eta\circ\Ch^{1,0}-\bar t\bar\eta\circ \bar\p+t\psi+O(|t|^2);\\
\theta_t=&\theta-t\eta(\theta)+t\varphi+O(|t|^2).
\end{align*}

Note that for any $h\in C^\infty(X_0)$, the $\bar\p_t$ operator for $X_t$ is given by
$$\bar\p_t h=\bar\p h+t\eta\circ\p h-\bar t\bar\eta\circ\bar\p h+O(|t|^2).$$

Thus for $\bar\p_t$ of $E$ on $X_t$, we have $\bar\p_t=\bar\p+t\eta\circ\Ch^{(1,0)}-\bar t\bar\eta\circ \bar\p+t\psi+O(|t|^2)$ for some $\psi$ being a smooth (0,1)-form of $\mathrm{End}E$. 
Now we assume $\theta_t=(A_0+tA_1+O(t^2))\omega_t$ where $A_0$ and $A_1$ are smooth sections of $\mathrm{End}E$ and $\om_t$ is a (1,0) local frame of $X_t$ defined in section \ref{Rie}. Then 
$$\theta_t=A_0dz+tA_1dz-t\eta(A_0dz)+O(t^2).$$
Thus we must have $A_0dz=\theta$, and this completes the proof of our claim.

Now we consider the compatible condition $\bar\p_t\theta_t=0$ for the cocycle $(\varphi,\psi,\eta)$ in our claim. For any smooth section $B\omega_t$ of $\mathrm{End}E\otimes K_{X_t}$ and $a\in\mathcal A^{0,0}(E)$, we have
\begin{align*}&\big(\bar\p_t(B\omega_t)\big)(a)=(\bar\p_tB)(a)\omega_t+B(a)\bar\p_t\omega_t\\
=&\{\bar\p_t(B(a))-B(\bar\p_ta)\}\omega_t+(Ba)\bar\p_t\omega_t\\
=&(\bar\p B)\wedge (dz)(a)-t\Ch^{1,0}(\eta(Bdz))(a)+t[\psi,Bdz](a)+O(|t|^2).
\end{align*}Take $B\omega_t=\theta_t$ in the above equality and we get 
\begin{align}\label{dgla}
\bar\p\varphi+[\psi,\theta]=\Ch^{1,0}(\eta(\theta)),
\end{align}
where $\eta:\mathcal A^{p,q}(\mathrm{End}E)\to\mathcal A^{p-1,q+1}(\mathrm{End} E)$ is the contraction. This \eqref{dgla} is just the \textbf{closed condition} $D^1(\varphi,\psi,\eta)=0$ defined in the truncated complex of \eqref{reso}.

Besides, $(g,0)\in \mathcal A^{0,0}(\mathrm{End} E)\oplus\mathcal A^{0,0}(T_{X_0})=\mathcal A^{0,0}(\At)$ defines an exact class$$D^0(g,0)=(\tad(g),\bar\p g,0).$$
This exact term arises from a gauge transform $G:=\mathrm{id}_E+tg+O(t^2)$ of $(E,\bar\p,\theta)$, for any such $g$. (If we consider $(0,\tau)\in\mathcal A^{0,0}(\mathrm{End} E)\oplus\mathcal A^{0,0}(T_{X_0})$ with $\tau\ne0$, the induced exact term $D^0(0,\tau)$ comes from the diffeomorphism of $X_0$ homotopic to the identity map of $X_0$.) This completes the proof of \eqref{Holtan}.

By Lemma \ref{2termcx}, we have the induced long exact sequence of hypercohomology groups. Since $H^0(X_0,T_{X_0})=0$ and one can check that the connecting homomorphism $\delta:H^1(X_0,T_{X_0})\to \mathbb H^2(X_0,(\mathrm{End}E,\mathrm{ad}(\theta)))$ is zero, we have \eqref{shortH}.
\end{proof}

\begin{remark}
1. If $T_0S$ is not equal to $H^1(X_0,T_{X_0})$, we let $\mathcal L:=\rho_{KS}(T_0S)\subset H^1(X_0,T_{X_0})$. Then the deformation class of $(E,\bar\p,\theta)\in\Mdol$ is $$j_\ast^{-1}(\mathcal L)\subset\mathbb H^1(\At, \tad\oplus \Ch^{1,0}(\theta\lrcorner)),$$
where $j_\ast$ is defined in \eqref{shortH}.\\
2. Fix $\frac{\p}{\p t}\in T_0S$. Let 
\begin{align}\label{Aspace}
A_{\p/\p t}:=\{[(\varphi,\psi,\eta)]\in \mathbb H^1(\At, \tad\oplus \Ch^{1,0}(\theta\lrcorner)):\ [\eta]=\rho_{KS}(\frac{\p}{\p t})\}\end{align}
which is equal to 
$\{v\in T^{1,0}_{\sigma(0)}\Mdol:p_\ast(v)=\frac{\p}{\p t}\}.$ For simplicity, we write a class in $A_{\p/\p t}$ as $[(\varphi,\psi)]$ omitting $\eta$. For any holomorphic section $\sigma:S\to\Mdol$, we have $$\sigma_\ast(\p /\p t)=[(\varphi,\psi)]\in A_{\p /\p t}.$$
3. Since $H^2(X_0,\mathrm{End}E)=0$, we may take $\psi=0$ in \eqref{dgla} to get a 'trivial' deformation $(E,\bar\p_t)$ of the holomorphic bundle $(E,\bar\p)$. Then \eqref{dgla} is the equation
$$\bar\p\varphi=\Ch^{1,0}(\eta(\theta)),$$
which coincides with the deformation equation in \cite[Theorem 1.3]{LRY} solving a holomorphic $(1,0)$ section of $(\mathrm{End}E,\bar\p_t)$ for each $t$ with the initial section $\theta\in \Gamma(\mathrm{End}E\otimes K_{X_0})$.

\end{remark}
\subsection{Deformation class of a real analytic family of Higgs bundles}
In this section, we assume $\sigma: S\to\Mdol$ is a real analytic section. We focus on its anti-holomorphic deformation class.

First, we recall the anti-holomorphic involution of Deligne's twistor space $\mathcal{M}_\mathrm{Del}(X_0) \to \mathbb{P}^1$ in \cite{Simp95}. 
Define the involution map $\sigma': \mathcal{M}_\mathrm{Del}(X_0) \to \mathcal{M}_\mathrm{Del}(X_0)$ by 
\[
(E,\lambda\Ch^{1,0}+\bar\p+\theta+\lambda\theta^{\star_{h_0}},\lambda) \mapsto (\overline E^\vee,-\bar\lambda^{-1}\Ch^{1,0}+\bar\p+\theta-\bar\lambda^{-1}\theta^{\star_{h_0}},-\bar\lambda^{-1}).
\]
We restrict this involution to $\mathcal M_{\mathrm{Dol}}(X _0)\to\{0\}$, 
which gives an anti-holomorphic homeomorphism
\begin{align*}
    \sigma':\mathcal M_{\mathrm{Dol}}(X_0) & \to \mathcal M_{\mathrm{Dol}}(\overline {X_0})\\
    (E,\bar\p,\theta) & \mapsto (\overline E^\vee,\Ch^{1,0},\theta^{\star_{h_0}})
\end{align*}
where $\overline{E}^\vee$ is the vector bundle whose transition functions are inverse to the conjugate transpose of transition functions of $E$, and $\overline{X_0}$ is the Riemann surface diffeomorphic to $X_0$ with the conjugate complex structure.

This gives a $\mathbb C$-linear isomorphism between $T^{0,1}_{(E,\bar{\partial}, \theta)} \mathcal M_{\Dol}(X_0)$ and $T^{1,0}_{(\overline E^\vee,\Ch^{1,0},\theta^{\star_{h_0}})} \mathcal M_{\Dol}(\overline{X_0})$, i.e.
\[
\sigma'_{*,(E,\bar{\partial}, \theta)} : \overline \Hy \stackrel{\simeq}{\rightarrow} \mathbb H^1(\overline{X_0}, (\End \overline E^\vee, \mathrm{ad}(\theta^{\star_{h_0}}))).
\]

We want to use the Dolbeault resolution to compute $\overline\Hy$. So we view $\overline E^\vee\cong E$ by the harmonic metric $h_0$, and then $\mathrm{End}(\overline E^\vee)\cong\mathrm{End}E$. In addition, we clearly have $\Omega_{\overline{X_0}}^{1,0}=\Omega_{X_0}^{0,1}$ and $\Omega_{\overline{X_0}}^{0,1}=\Omega_{X_0}^{1,0}$. Hence, just as in section \ref{htdol}, we also have the following Dolbeault resolution:
\[
\begin{tikzcd}
C^{1,0}:=\mathcal A^{1,0}(\End E  ) \arrow[r, "{\mathrm{ad}( \theta^{\star_{h_0}})}"]                                                   & C^{1,1}:={\mathcal A^{1,0}(\End E \otimes \Omega_{X_0}^{0,1})} \\
C^{0,0}:=\mathcal A^{0,0} (\End E) \arrow[r, "{\mathrm{ad}( \theta^{\star_{h_0}})}"] \arrow[u, "\Ch^{1,0}"] & C^{0,1}:=\mathcal A^{0,0} (\End E \otimes \Omega_{X_0}^{0,1}) \arrow[u, "\Ch^{1,0}"]              
\end{tikzcd}
\]
which gives the following truncated complex  
\[C^{0,0} \overset{d^{0c}}{\longrightarrow} C^{1,0} \oplus C^{0,1} \overset{d^{1c}}{\longrightarrow}  C^{1,1}\]  
where 
\begin{gather*}
    d^{0c}(g) = (\Ch^{1,0} g, [\theta^{\star_{h_0}},g]) \quad \text{for } g \in C^{0,0},\\
    d^{1c}(\varphi, \psi) =  \Ch^{1,0} \psi +[\varphi,\theta^{\star_{h_0}}] \quad \text{for } (\varphi, \psi) \in C^{0,1} \oplus C^{1,0}.
\end{gather*}
Hence
$$T^{0,1}_{(E, \overline{\partial}, \theta)} \mathcal{M}_{\mathrm{Dol}}(X_0)=\overline\Hy=\frac{\mathrm{Ker\ }d^{1c}}{\mathrm{Im\ }d^{0c}}.$$
And we have the following complex conjugate operator 
\begin{align*}
    \mathrm{bar} : \Hy & \rightarrow \overline\Hy\\
    [(\varphi,\psi)] & \mapsto [(\psi^{\star_{h_0}}, -\varphi^{\star_{h_0}})].
\end{align*}

We shall describe how a class $[(\varphi,\psi)]\in\overline\Hy$ deforms $(E,\bar\p,\theta)$ in an anti-holomorphic manner. For this, we need to define the harmonic 1-form.

\begin{definition}\label{harcl}
    Suppose $(\varphi,\psi) \in C^{1,0}\oplus C^{0,1}$, we say $(\varphi,\psi)$ is harmonic if $\bar\p\varphi + [\psi,\theta] =0$ and $ \Ch^{1,0}\psi + [\varphi,\theta^{\star_{h_0}}] = 0$.
\end{definition} 

Then Hitchin \cite{Hit} proved that 

\begin{proposition}
    For each class in $\Hy$ (or in $\overline{\Hy}$), there is a unique harmonic representative $(\varphi , \psi)$ of this class as defined in Definition \ref{harcl}.
\end{proposition}

Any harmonic class $[(\varphi,\psi)]\in\overline\Hy$ defines a (first-order) anti-holomorphic deformation of $(E,\bar\p, \theta)$ by
$$\bar\p_t=\bar\p+\bar t\psi+O(|t|^2);\ \theta_t=\theta+\bar t\varphi+O(|t|^2).$$

Combining this with Proposition \ref{holtan} and Lemma \ref{realsec}, we have
\begin{proposition}\label{aholtan}Let $\sigma:S\to\Mdol$ be a real analytic section and $\frac{\p}{\p t}\in T_0S$. Then there exists a class $[(\varphi_1,\psi_1)]\in A_{\p/\p t}$ (defined in \eqref{Aspace}) and a harmonic class $[(\varphi_2,\psi_2)]\in\overline\Hy$ such that $\sigma_\ast(\frac{\p}{\p t})$ has $[(\varphi_1,\psi_1)]\in A_{\p/\p t}$ as its (1,0) part and has $[(\varphi_2,\psi_2)]$ as its (0,1) part. 
\end{proposition}

\section{Isomonodromic deformation of a Higgs bundle}

Let $\sigma: S\to\Mdol$ be the isomonodromic deformation of $(E,\bar\p,\theta)$ on $X_0$ defined in section \ref{Isomd}, which is a real analytic section. In this section, we shall study its first order deformation classes and prove Theorem \ref{thm-A}. The main technique is to relate every deformation class to the first-order deformation of the harmonic metric.

Let $(\mathcal V,D)$ be the flat rank $n$ complex vector bundle over $X_0$ corresponding to $(E,\bar\p,\theta)$. (If we forget the holomorphic structure, $\mathcal V=E$ over $X_0$.) Let $h_0$ be the harmonic metric of $(\mathcal V,D)$ over $X_0$. Note that  $(\mathcal V,D)$ over $X_t$ is the isomonodromy deformation of the initial flat bundle $(\mathcal V,D)$ over $X_0$. Now we let $h$ be the harmonic metric of $(\mathcal V,D)$ over $X_t$, which satisfies the following equation
\begin{align}\label{Hmetric}
    D_h^{\star_h}\psi_h=0,
\end{align}
where $D=D_h+\psi_h$ is the decomposition of $D$ into unitary and Hermitian parts, and $\star_h$ is the Hodge star operator with respect to $h$. 

\begin{lem}\label{Taylor}
    The harmonic metric $h$ has the following Taylor expansion with $g\in\mathcal A^0(\mathrm{End}(E))$ (after pulling back $(\mathcal V,h)$ to $X_0$ by $\mathrm{id}:X_0\to (X_0,\mu_t)=X_t$)
    \begin{align*}
        h=h_0+th_0g+\bar th_0g^{\star_{h_0}}+O(|t|^2).
    \end{align*}
\end{lem}
\begin{proof}
    Since $h$ is a Hermitian metric, one has $h=h_0+th_1+\bar t\bar h_1^t+O(|t|^2))$. Then we may assume $h_1=h_0g$ for some $g$ being a section of End$(E)$. Then $\bar h_1^t=\bar g^t h_0=h_0 g^{\star_{h_0}}.$ 
\end{proof}
We also view $h$ as a harmonic map on $\widetilde X$, and by \cite[Lemma 2.16]{Li} we have the following formula for $\psi_h$
\begin{align}\label{psi}
    \psi_h=-\frac{1}{2}h^{-1}Dh.
\end{align}

Combining \eqref{Hmetric} and \eqref{psi}, we will derive a PDE for $g$ later in this section. This $g$ determines how the harmonic metric deforms. Therefore, $g$ determines the first-order deformation class of the isomonodromic deformation of a Higgs bundle as below.

\begin{prop}\label{thetadbar}The first order isomonodromic  deformation can be explicitly expressed as follows
\begin{align}
  \bar\p_t
  &=\bar\p
    +t\left(\eta\circ D_{h_0}^{1,0}
             -\frac12[\theta^{\star_{h_0}},g]\right)
    +\bar t\left(-\bar\eta\circ\bar\p
             -\frac12[\theta^{\star_{h_0}},g^{\star_{h_0}}]\right)
    +O(|t|^2),\label{eq:dbar-short}\\
  \theta_t
  &=\theta-t(\eta(\theta))
    -\frac t2D_{h_0}^{1,0}g
    +\bar t\left(\bar\eta(\theta^{\star_{h_0}})
             -\frac12D_{h_0}^{1,0}g^{\star_{h_0}}\right)
    +O(|t|^2).\label{eq:theta-short}
\end{align}
\end{prop}
\begin{proof}Recall on $X_0$, the flat smooth connection $D$ has the following decomposition 
$$D=D_{h_0}+\theta+\theta^{\star_{h_0}}$$ with $D_{h_0}$ being the Chern connection of $(E,h_0)$ on $X_0$.
By Lemma~\ref{Taylor} and \eqref{psi}, we have
\begin{align*}\Psi_h=&\Psi_{h_0}+[\Psi_{h_0},tg+\bar tg^{\star_{h_0}}]-\frac12 D(tg+\bar tg^{\star_{h_0}})+O(|t|^2)\\
=&\Psi_{h_0}+\frac12[\Psi_{h_0},tg+\bar tg^{\star_{h_0}}]-\frac12 D_{h_0}(tg+\bar tg^{\star_{h_0}})+O(|t|^2).
\end{align*}
By comparing its $(1,0)$ and $(0,1)$ parts on $X_t$, we get (after modulo $|t|^2$)
\begin{equation}\label{eq_thetat_thetatstar}
\begin{aligned}\theta_t=&\theta+t(-\eta(\theta)+\frac12[\theta,g]-\cc D_{h_0}^{1,0} g)+\bar t(\bar \eta(\theta^{\star_{h_0}})+\frac12[\theta,\bg]-\cc D_{h_0}^{1,0} \bg),\\
\theta_t^{\star_h}=& \theta^{\star_{h_0}}+t(\eta(\theta)+\frac12[\theta^{\star_{h_0}},g]-\cc \bar\p g)+\bar t(-\bar \eta(\theta^{\star_{h_0}})+\frac12[\theta^{\star_{h_0}},\bg]-\cc \bar\p \bg).
\end{aligned}\end{equation}
The $(0,1)$-part of $D-\theta_t^{\star_h}$ on $X_t$ gives $\bar\p_t$. Thus we can use \eqref{eq_thetat_thetatstar} to compute $\bar\p_t$ and $\theta_t$. After taking the gauge transformation $\mathscr U:=\operatorname{id}-\frac12(tg+\bar t g^{\star_{h_0}})$, we get \eqref{eq:dbar-short} and \eqref{eq:theta-short}.
\end{proof}

Now we determine the PDE that $g$ satisfies. This linear non-homogeneous PDE can be viewed as the linearization of the non-linear equation \eqref{Hmetric} by using the isomonodromic deformation to 'eliminate' the non-linear term. 
 
\begin{prop}\label{pde}
    Two smooth endomorphisms of $E$, $g$ and $\bg$, satisfy the following equations:
    \begin{equation}\label{equ_g}
    \begin{aligned}
        \bar\p\Ch^{1,0} g&=-2\Ch^{1,0}(\eta(\theta))-[\theta,[\theta^{\star_{h_0}},g]];\\
        \bar\p\Ch^{1,0} \bg&=2\bar\p(\bar \eta(\theta^{\star_{h_0}}))-[\theta,[\theta^{\star_{h_0}},\bg]].
    \end{aligned}\end{equation}
\end{prop}
\begin{proof}Note that $X_t$ is a family of compact Riemann surfaces, the equation \eqref{Hmetric} is equivalent to the equation $\bar\p_t\theta_t=0$. Applying \eqref{dgla} to the $t$-coefficient of \eqref{eq:dbar-short} and \eqref{eq:theta-short}, we get the first equation of \eqref{equ_g}. A similar argument applies to $\bg$. 
\end{proof}

Now we apply the deformation theory established in section \ref{DT}, including Proposition \ref{holtan} and Proposition \ref{aholtan} to prove the theorem on the first order deformation classes.

\begin{proof}[Proof of Theorem~\ref{thm-A}](i) By Proposition \ref{thetadbar} and Proposition \ref{holtan}, $[(-\cc\Ch^{1,0}g,-\cc [\theta^{\star_{h_0}},g],\eta)]$ is simply the holomorphic deformation class.\\
(ii) Clearly, by Proposition \ref{aholtan} and the above, $$[(\bar\eta(\theta^{\star_{h_0}})-\cc\Ch^{1,0}\bg,-\cc [\theta^{\star_{h_0}},\bg])]\in T_{\sigma(0)}^{0,1}\mathcal M_{\mathrm{Dol}}(X_0)$$ is the deformation class, which is a harmonic representative. This class is just $[(\bar\eta(\theta^{\star_{h_0}}),0)]\in\overline\Hy$ after modulo the exact term.
\end{proof}

So the class $[(0,\eta(\theta))]\in \Hy$ given by the complex conjugate of $\Phi_{KS}^{0,1}(\p/\p t)$ as above, is the obstruction for the isomonodromic section $\sigma:S\to\Mdol$ being holomorphic along $\frac{\p}{\p t}$. We have the following example where $\Phi_{KS}^{0,1}$ is not zero:

\begin{exa}\label{Hiteg} Recall Hitchin's uniformization Higgs bundle $(E:=K_{X_0}^{\cc}\oplus K_{X_0}^{-\cc},\theta=\begin{pmatrix}0&1 \\0 & 0\\\end{pmatrix})$ over $X_0$ defined in \cite[Corollary 4.23]{Hit}. Clearly, $$\theta_\ast: H^1(X_0,T_{X_0})\to \Hy$$
is injective. For other Higgs bundle $(E,\bar\p,\theta)$, we shall study whether $\theta_\ast$ is injective (or non-zero) in Section \ref{sec5-1}.
\end{exa}

\begin{rmk}We focus on the rank 1 case, i.e., $\mathcal M_{\mathrm{Dol}}(\mathcal X/S,\mathbb C^\ast)=f_\ast\Omega^1_{\mathcal X/S}\times_S \mathrm{Jac}(\mathcal X/S)$. Let $\varphi$ be the usual Higgs field of the weight-one variation of Hodge structure
\[
  R^1f_*\mathbb C\otimes\mathcal O_S
  \cong f_*\Omega^1_{\mathcal X/S}\oplus R^1f_*\mathcal O_\mathcal X.
\]
Let $[\eta]$ be a Kodaira-Spencer class in the image of $\rho_{\mathrm{KS}}$. For example, $[\eta]=\rho_{\mathrm{KS}}(\p/{\p t})$. The expression $\theta_*([\eta])$ defined in \eqref{Theta_ks} is exactly $\varphi_{\p/\p t}(\theta)$. 
Thus Theorem~\ref{thm-A}(ii) recovers the usual Higgs field in rank $1$ case.\end{rmk}

\section{Isomonodromic deformation of a graded Higgs bundle}
Let $(E,\bar\p,\theta)$ be a \textbf{graded Higgs bundle} on $X_0$, i.e., $(E=\bigoplus\limits_{p=0}^kE^{p,k-p},\theta=\bigoplus\limits_{p=1}^k\theta^{p,k-p})$ such that $\theta^{p,k-p}:E^{p,k-p}\to E^{p-1,k-p+1}\otimes  K_{X_0}$. Let $\sigma: S\to\Mdol$ be the isomonodromic deformation of $(E,\bar\p,\theta)$ defined in section \ref{Isomd}. Then we have a family of Higgs bundles $(E,\bar\p_t,\theta_t)$ on $X_t$. 

We aim to show that for sufficiently small $t$, the Higgs field $\theta_t$ is \textbf{non-nilpotent} along the tangent direction $\frac{\p}{\p t}\in T_0S$ if $\Phi^{0,1}_{KS}$ is non-zero along this direction.

First, we have a good expression for $g$ in Proposition \ref{pde}:

\begin{lem}\label{g} The endomorphism $g$ has the decomposition $g=\bigoplus\limits_{p=1}^kg^{p,k-p}$ where $g^{p,k-p}:E^{p,k-p}\to E^{p-1,k-p+1}$ is the composition of $g|_{E^{p,k-p}}$ with $E\to E^{p-1,k-p+1}$.\end{lem}
\begin{proof}By a direct check and the fact that $h_0=(\bigoplus h_0|_{E^{p,k-p}})$ is block-wise diagonal (proved in \cite[Lemma 3.5]{Li}), the PDE \eqref{equ_g} for $g$ in Proposition \ref{pde} is linear for $g|_{E^{p,k-p}}$ composed with $E\to E^{q,k-q}$ for any $q\ne p-1$. By the uniqueness of the solution of the harmonic metric equation \eqref{Hmetric} (up to a parallel section), we obtain the desired result.
\end{proof}

\begin{thm}\label{nonnil} Assume (a) $\sigma(0):=(E,\bar\p,\theta)$ is a graded Higgs bundle on $X_0$, and \\(b) the non-abelian Kodaira-Spencer map $\Phi^{0,1}_{KS}$ is non-zero at $\sigma(0)$ for some $\frac{\p}{\p t}\in T_0S$.\\
Then the isomonodromic deformation of $(E,\bar\p,\theta)$ along the direction $\frac{\p}{\p t}\in T_0S$ is non-nilpotent.
\end{thm}
\begin{proof}Let $[\eta]:=\rho_{KS}(\frac{\p}{\p t})$. We choose a representative $\eta\in \mathcal A^{0,1}(T_{X_0})$ as usual. Then we have $[(0,\eta(\theta))]\ne0\in\Hy$ by Lemma \ref{thetaast}.

Note that $\bar \eta(\theta^{\star_{h_0}})$ consists only of the following smooth sections for $p=0,1,\cdots,k-1$
$$\bar \eta((\theta^{p+1,k-p-1})^{\star_{h_0}}):E^{p,k-p}\to E^{p+1,k-p-1}\otimes K_{X_0},$$
since the harmonic metric $h_0$ is block-wise diagonal.
By Lemma \ref{g}, one has 
\begin{itemize}
\item $\eta(\theta)$ and $\Ch^{1,0}g$ map $E^{p,k-p}$ to $\mathcal A^1(E^{p-1,k-p+1})$; 
\item $[\theta,g]$ maps $E^{p,k-p}$ to $E^{p-2,k-p+2}\otimes \Omega^{1,0}_{X_0}$; 
\item $[\theta,\bg]$ maps $E^{p,k-p}$ to $E^{p,k-p}\otimes \Omega^{1,0}_{X_0}$; 
\item $\bar \eta(\theta^{\star_{h_0}})-\cc\Ch^{1,0}\bg$ maps $E^{p,k-p}$ to $E^{p+1,k-p-1}\otimes \Omega^{1,0}_{X_0}$.
\end{itemize}
Then we can compute $\mathrm{tr}(\theta_t^2)$:
\begin{align}\label{eq_tr2}\mathrm{tr}(\theta_t^2)=\bar t\cdot \mathrm{tr}(\theta(2\bar \eta(\theta^{\star_{h_0}})-\Ch^{1,0}\bg))+O(|t|^2).\end{align}
It suffices to prove $\mathrm{tr}(\theta(2\bar \eta(\theta^{\star_{h_0}})-\Ch^{1,0}\bg))\not\equiv 0.$
By the PDE \eqref{equ_g} for $\bg$ in Proposition \ref{pde}, we set
$$\phi:=2\bar \eta(\theta^{\star_{h_0}})-\Ch^{1,0}\bg\in \mathcal A^{1,0}(\mathrm{End}E)\quad \text{with}\quad \bar\p\phi=[\theta,[\theta^{\star_{h_0}},\bg]].$$
For any $\varphi_1,\varphi_2\in \Ehol$, we have their inner product$$ i\int_{X_0}\mathrm{tr}(\varphi_1\wedge\varphi_2^{\star_{h_0}})\overset{\text{if }\varphi_1=\varphi_2}{=}\|\mathrm{tr}(\varphi_1\wedge\varphi_1^{\star_{h_0}})\|_{L^1}.$$
Thus \begin{align*}&\|4\mathrm{tr}(\bar\eta(\theta^{\star_{h_0}})\wedge(\bar\eta(\theta^{\star_{h_0}}))^{\star_{h_0}})\|_{L^1}\\=&\|\mathrm{tr}(\Ch^{1,0}\bg\wedge(\Ch^{1,0}\bg)^{\star_{h_0}})\|_{L^1}+\|\mathrm{tr}(\phi\wedge\phi^{\star_{h_0}})\|_{L^1}+2\mathrm{Re}(i\int_{X_0}\mathrm{tr} (\phi\wedge (\Ch^{1,0}\bg)^{\star_{h_0}})).
\end{align*}
By the K\"ahler identity for $\Ch^{1,0}$ (see \cite{Simp92} and \cite[Remark 9.2]{Gui}), we have
\begin{align*}&2\mathrm{Re}(i\int_{X_0}\mathrm{tr} (\phi\wedge (\Ch^{1,0}\bg)^{\star_{h_0}}))=2\mathrm{Re}(\int_{X_0}\mathrm{tr} (i\Lambda_{\omega_0}\bar\p\phi\cdot(\bg)^{\star_{h_0}}))
\\=&2\mathrm{Re}(\int_{X_0}i\Lambda_{\omega_0}\mathrm{tr} ([\theta,[\theta^{\star_{h_0}},\bg]]\cdot g)
=2\|\mathrm{tr} ([\theta,g]\wedge[\theta,g]^{\star_{h_0}})\|_{L^1}.
\end{align*}

If $\phi\ne0$ or $[\theta,g]\ne0$, we have \[\|4\mathrm{tr}(\bar\eta(\theta^{\star_{h_0}})\wedge(\bar\eta(\theta^{\star_{h_0}}))^{\star_{h_0}})\|_{L^1}>\|\mathrm{tr}(\Ch^{1,0}\bg\wedge(\Ch^{1,0}\bg)^{\star_{h_0}})\|_{L^1}.\] 
 Thus by Cauchy's inequality
\begin{align*}\|4\mathrm{tr}(\bar\eta(\theta^{\star_{h_0}})\wedge(\bar\eta(\theta^{\star_{h_0}}))^{\star_{h_0}})\|_{L^1}>\|2\mathrm{tr}(\Ch^{1,0}\bg\wedge(\bar\eta(\theta^{\star_{h_0}}))^{\star_{h_0}})\|_{L^1}.
\end{align*}
This implies the non vanishing of the right hand side of \eqref{eq_tr2}.

Now we focus on the case that $\phi\equiv 0$ and $[\theta,g]\equiv0$. But this contradicts the assumption that $[(0,\eta(\theta))]\in \Hy$ is not zero.
\end{proof}

\begin{rmk} By Example \ref{Hiteg}, if $\rho_{KS}$ is injective, Theorem \ref{nonnil} applies to the isomonodromic deformation of Hitchin's uniformization Higgs bundle $(E:=K_{X_0}^{\cc}\oplus K_{X_0}^{-\cc},\theta=\begin{pmatrix}0&1 \\0 & 0\\\end{pmatrix})$ over $X_0$. In this case, its (infinitesimal) isomonodromically deformed Higgs bundles are non-nilpotent. Using \cite[Corollary 4.23]{Hit}, one can prove if $\rho_{KS}$ is injective, then any (not just infinitesimal) isomonodromic deformation of Hitchin's uniformization Higgs bundle is not nilpotent.
\end{rmk}

\section{Non-existence of holomorphic isomonodromic deformation of Higgs bundles}\label{sec5-1}
In this section, let $(E,\bar\p,\theta)$ be a $\mathrm{GL}(n,\mathbb C)$-polystable Higgs bundle over a compact Riemann surface $X_0$ of genus $g\geq 2$. We focus on the induced map defined in \eqref{Theta_ks}
$$\theta_\ast:H^1(X_0,T_{X_0})\to\Hy.$$
We give several lemmas before proving Theorem \ref{main B} and Theorem \ref{main C}.
\begin{lem}\label{redsl}If $\mathrm{tr}(\theta)\ne0$ in $H^0(K_{X_0})$, then $\theta_\ast$ is a non-zero map.
\end{lem}
\begin{proof} We claim that for a non-zero $\om:=\mathrm{tr}(\theta)$ in $H^0(K_{X_0})$, there exists $[\eta]\in H^1(T_{X_0})$ such that $[\eta(\om)]\ne0\in H^1(\mathcal O_{X_0}).$ 

By  Serre's duality $H^1(\mathcal O_{X_0})=H^0(K_{X_0})^\vee$, it suffices to find some $\beta\in H^0(K_{X_0})$ such that $[\eta(\om\cdot \beta)]\in H^1(K_{X_0})$ is non-zero. Note that if $\om$ and $\beta$ are both non-zero in $H^0(K_{X_0})$, their multiplication $\om\cdot \beta\in H^0(K_{X_0}^{\otimes 2})$ is non-zero. Then we have some $[\eta]\in H^1(T_{X_0})$ such that $[\eta(\om\cdot \beta)]\in H^1(K_{X_0})$ is non-zero because\begin{align}\label{pefp}H^1(T_{X_0})\times H^0(K_{X_0}^{\otimes2})\to H^1(K_{X_0})=\mathbb C
\end{align} is a perfect pair.

If $\theta_\ast$ is the zero map, then for any $[\eta]\in H^1(T_{X_0})$, by Lemma \ref{thetaast}, we have $[(0,\eta(\theta))]\in\Hy$ is zero. Then for any $[\eta]$, there exists a smooth section $g\in\mathcal A^0(\End E)$ such that $$[\theta,g]=0\text{ and }\bar\p g=\eta(\theta),$$
and hence $[\eta\lrcorner \mathrm{tr}(\theta)]=[\mathrm{tr}(\eta(\theta))]=[\bar\p(\mathrm{tr}(g))]=0\in H^1(\mathcal O_{X_0})$, which is a contradiction.
\end{proof}
By lemma \ref{redsl}, if the isomonodromic deformation of a $\mathrm{GL}(n,\mathbb C)$-Higgs bundle $(E,\bar\p,\theta)$ is holomorphic over $\mathcal T_g$, then this Higgs bundle must be a $\SL$-Higgs bundle.
\begin{lem}\label{H1Ker} Recall the sheaf morphism $\tad:\End E\to \End E\otimes K_{X_0}$ and we have the subsheaf $\Ker(\tad)\subset \End E$. Then the induced map $\theta_\ast$ factors through $H^1(\Ker(\tad))$. And we have the following commutative diagram
\[
\begin{tikzcd}
H^1(T_{X_0}) \arrow[r, "\theta_\ast"]  \arrow[rd,"\theta_\ast"']                                                 & \Hy \\
& H^1(\Ker(\tad)) \arrow[u, hook]              
\end{tikzcd}
\]
\end{lem}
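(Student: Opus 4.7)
The plan has three steps.

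\textbf{Step 1 (Key observation).} I would first show that the sheaf map $\theta\colon T_{X_0} \to \mathrm{End}E$ factors through the subsheaf $\mathrm{Ker}(\tad) \subset \mathrm{End}E$. On a Riemann surface the Higgs integrability condition $\theta \wedge \theta = 0$ is automatic, and it is equivalent to $[\theta,\theta(v)] = 0$ for every local section $v$ of $T_{X_0}$. Concretely, writing $\theta = A \otimes \omega$ in a local frame gives $\theta(v) = \omega(v) A$ and
\[
\tad(\theta(v)) = [\theta, \theta(v)] = \omega(v)\,[A,A] \otimes \omega = 0,
\]
so $\theta$ factors as $T_{X_0} \to \mathrm{Ker}(\tad) \hookrightarrow \mathrm{End}E$.

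\textbf{Step 2 (Upgrade to complexes and take hypercohomology).} Writing $K^\bullet = (\mathrm{End}E \xrightarrow{\tad} \mathrm{End}E \otimes K_{X_0})$ concentrated in degrees $0$ and $1$, Step 1 upgrades the morphism of complexes $(T_{X_0} \to 0) \to K^\bullet$ that induces $\theta_\ast$ to a composition through the subcomplex $(\mathrm{Ker}(\tad) \to 0) \hookrightarrow K^\bullet$. Applying $\mathbb{H}^1$, and using that $\mathbb{H}^1$ of a sheaf placed in degree $0$ is just $H^1$, produces the desired triangular factorization.

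\textbf{Step 3 (Injectivity of the vertical inclusion).} Finally I would verify that $H^1(\mathrm{Ker}(\tad)) \to \Hy$ is really a monomorphism, as suggested by the hooked notation. For this I would use the short exact sequence of complexes
\[
0 \to (\mathrm{Ker}(\tad) \to 0) \to K^\bullet \to (\mathrm{Im}(\tad) \hookrightarrow \mathrm{End}E \otimes K_{X_0}) \to 0.
\]
The quotient complex has injective differential, so its cohomology sheaves are $0$ in degree $0$ and $\mathrm{coker}(\tad)$ in degree $1$; in particular its $\mathbb{H}^0$ vanishes. The associated hypercohomology long exact sequence then begins
\[
0 \to H^1(\mathrm{Ker}(\tad)) \to \mathbb{H}^1(K^\bullet),
\]
which is the required injectivity.

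\textbf{Main obstacle.} The entire content sits in Step 1, and it is essentially trivial: one-dimensionality of $X_0$ forces $[\theta,\theta(v)]$ to vanish automatically. Steps 2 and 3 are formal, with Step 3 requiring only the standard hypercohomology long exact sequence of a short exact sequence of two-term complexes.
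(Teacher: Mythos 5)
Your proposal is correct and follows essentially the same route as the paper: the paper also uses the short exact sequence of two-term complexes with quotient $(Q\hookrightarrow \End E\otimes K_{X_0})$ (your $\mathrm{Im}(\tad)$), notes that its $\mathbb H^0$ vanishes because the differential is injective, and deduces both the factorization and the injectivity of $H^1(\Ker(\tad))\hookrightarrow\Hy$. The only difference is that you spell out the local verification that $\theta$ lands in $\Ker(\tad)$, which the paper simply asserts.
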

\begin{proof}We have the following diagram of two short exact sequences
\[
\begin{tikzcd}[column sep=small]
0 \arrow[r, ]&\Ker(\tad) \arrow[r, ] \arrow[d, "\tad"]& \End E \arrow[r,] \arrow[d, "\tad"]& Q \arrow[r, ] \arrow[d, "\tad"]& 0 \\
0 \arrow[r, ]& 0 \arrow[r, ]&\End E\otimes K_{X_0}\arrow[r, equal]
 & \End E\otimes K_{X_0} \arrow[r, ]&0
\end{tikzcd}
\]
where $Q$ is the quotient sheaf. Since, $\tad:Q\to\End E\otimes K_{X_0}$ is injective, one can check by definition that $$\mathbb H^0(Q\stackrel{\tad}{\longrightarrow} \End E\otimes K_{X_0})=0.$$ Thus we have $H^1(\Ker(\tad))\hookrightarrow\Hy$. Since $\theta: T_{X_0}\to \End E$ factors through $\Ker(\tad)$, we know that $\theta_\ast$ factors through $H^1(\Ker(\tad))$.
\end{proof}

\begin{lem}\label{limcase} Let $(E,\bar\p,\theta)$ be a nilpotent non-unitary Higgs bundle. And $(E',\bar\p',\theta')$ is defined to be  $\lim\limits_{\lambda\in\mathbb C^\ast,\lambda\to \infty}\lambda\cdot(E,\bar\p,\theta)$. Then $(E',\bar\p',\theta')$ is the associated graded Higgs bundle of $(E,\bar\p,\theta)$ and $\theta'\ne0$. If moreover $\theta'_{\ast}\ne0$, then $\theta_\ast\ne0$.
\end{lem}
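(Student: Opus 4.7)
The plan is to handle the three claims in turn: identifying $(E',\bar\p',\theta')$ as the associated graded, the non-vanishing $\theta'\neq 0$, and the rank implication $\theta'_\ast\neq 0\Rightarrow\theta_\ast\neq 0$. The last item is the essential content.

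First, since $\theta$ is nilpotent, the sheaves $F^i:=\mathrm{Im}(\theta^i)\otimes K_{X_0}^{-i}$ form a decreasing filtration of $E$ that terminates at $0$, and the restriction of $\theta$ yields surjections $\theta:F^i\twoheadrightarrow F^{i+1}\otimes K_{X_0}$. The associated graded Higgs bundle $\bigoplus_i F^i/F^{i+1}$ with its induced Higgs field is the standard fixed point of the Hitchin $\mathbb C^\ast$-action produced by the Rees/Bialynicki--Birula construction, and under the substitution $\lambda=1/t$ this fixed point is exactly $\lim_{\lambda\to\infty}\lambda\cdot(E,\bar\p,\theta)$. Non-unitarity gives $\theta\neq 0$, so $F^1\neq 0$; the induced surjection $F^0/F^1\twoheadrightarrow F^1/F^2\otimes K_{X_0}$ on graded pieces cannot vanish, for otherwise $F^1=F^2$, and iterating along the eventually-zero filtration would force $F^1=0$, a contradiction. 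Hence $\theta'\neq 0$.

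For the implication $\theta'_\ast\neq 0\Rightarrow\theta_\ast\neq 0$, I would use lower-semicontinuity of rank on the family obtained from the Rees construction. The key observation is that the $\mathbb C^\ast$-action preserves the rank of $\theta_\ast$: for $\lambda\in\mathbb C^\ast$ the pair $(\mathrm{id},\lambda)$ defines an isomorphism of complexes $(\End E,\mathrm{ad}\theta)\to(\End E,\mathrm{ad}(\lambda\theta))$, and under the induced isomorphism on $\mathbb H^1$ we have $(\lambda\theta)_\ast=\lambda\cdot\theta_\ast$, so $\mathrm{rank}(\theta_{\lambda,\ast})=\mathrm{rank}(\theta_\ast)$ whenever $\lambda\neq 0$. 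Over the Rees family parameterised by $t\in\mathbb A^1$, the relative hypercohomology of the complex $(\End\mathcal R,\mathrm{ad}(t\theta))$ is a coherent sheaf on $\mathbb A^1_t$, and the maps $\theta_{t,\ast}$ assemble into a morphism of coherent sheaves from $H^1(X_0,T_{X_0})\otimes\mathcal O_{\mathbb A^1}$. Lower-semicontinuity of rank for morphisms of coherent sheaves then yields $\mathrm{rank}(\theta'_\ast)=\mathrm{rank}(\theta_{0,\ast})\leq\mathrm{rank}(\theta_{t,\ast})=\mathrm{rank}(\theta_\ast)$ for small $t\neq 0$, which is the desired statement in contrapositive form.

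The main obstacle is the coherence-and-base-change step in the last paragraph: one must verify that the fibre of the relative hypercohomology at $t=0$ really coincides with $\mathbb H^1$ of the complex attached to the associated graded $(E',\bar\p',\theta')$, so that the fibrewise rank of the global morphism computes $\theta_{t,\ast}$. This should follow from the standard semicontinuity/base-change machinery once one checks that the Rees terms are flat over $\mathbb A^1_t$, but the verification is the technical heart of the argument.
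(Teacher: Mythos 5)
Your overall strategy is the same as the paper's: flow along the $\mathbb C^\ast$-orbit to the associated graded and invoke a semicontinuity property of $\theta_\ast$ at the limit point. The paper handles your first two claims by citing \cite[Proposition 3.11]{THit} (your image-filtration/Rees sketch is a reasonable outline of that), and packages the third claim more efficiently: the locus $\mathbf M=\{(E,\bar\p,\theta):\theta_\ast=0\}$ is a closed, $\mathbb C^\ast$-invariant analytic subvariety of $\mathcal M_{\mathrm{Dol}}(X_0)$, and since $(E',\bar\p',\theta')$ lies in the closure of the orbit of $(E,\bar\p,\theta)$, the vanishing $\theta_\ast=0$ would force $\theta'_\ast=0$. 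This works directly on the moduli space, where the relevant family already exists and $\mathbb H^1$ is the tangent sheaf, so no separate base-change analysis over $\mathbb A^1_t$ is needed.

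The step you flag as the ``technical heart'' is a genuine gap, not a routine verification. Lower semicontinuity of $\mathrm{rank}(\phi_t)$ fails for a morphism $\phi\colon\mathcal F\to\mathcal G$ of coherent sheaves when $\mathcal G$ is not locally free: for the quotient $\mathcal O_{\mathbb A^1}\to k(0)$ onto a skyscraper, the fibrewise rank is $0$ for $t\neq0$ and $1$ at $t=0$. In your setting the target is the relative $\mathbb H^1$ of the complexes $(\End\mathcal R,\mathrm{ad}(t\theta))$, and this sheaf acquires torsion at $t=0$ exactly when $\mathbb H^0$ of the fibre complex (the $\theta$-commuting endomorphisms, dual to $\mathbb H^2$) jumps --- which genuinely happens here, since the associated graded of a stable nilpotent Higgs bundle is typically strictly polystable and has extra Higgs endomorphisms. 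Flatness of the Rees terms does not rule this out; you would need to show directly that the relative $\mathbb H^1$ is locally free near $t=0$, or that the image of your morphism avoids the torsion, neither of which is automatic. The paper's formulation via the closed invariant subvariety $\mathbf M$ (cf.\ \cite[Corollary 1.6]{col25}) is how this difficulty is sidestepped, so to complete your argument you should either supply that local-freeness statement or switch to the moduli-space formulation.
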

\begin{proof}The first claim is proved in \cite[Proposition 3.11]{THit}. For the second claim, note that $$\mathbf M:=\{(E,\bar\p,\theta)\in \mathcal M_{\mathrm{Dol}}(X_0):\theta_\ast=0\}$$
is a closed analytic subvariety of $\mathcal M_{\mathrm{Dol}}(X_0)$ and $\mathbf M$ is invariant under the $\mathbb C^\ast$-action (also observed in \cite[Corollary 1.6]{CTW}). Thus $\theta'_{\ast}\ne0$ implies that $\theta_\ast\ne0.$
\end{proof}
\begin{lem}\label{crigra}Let $(E,\bar\p,\theta)$ be a graded Higgs bundle, i.e. $(E=\bigoplus\limits_{p=0}^kE^{p,k-p},\theta=\bigoplus\limits_{p=1}^k\theta^{p,k-p})$ such that $\theta^{p,k-p}:E^{p,k-p}\to E^{p-1,k-p+1}\otimes K_{X_0}$. If $\theta_\ast\equiv 0$, then \begin{align*}\theta_\ast^{p,k-p}:H^1(T_{X_0})&\to H^1(E^{p,k-p\vee}\otimes E^{p-1,k-p+1})\\ [\eta]&\mapsto [\eta(\theta^{p,k-p})]\end{align*}
are zero maps for $p=1,\cdots, k$.
\end{lem}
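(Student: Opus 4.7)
The plan is to exploit the weight grading that the splitting $E=\bigoplus_p E^{p,k-p}$ induces on $\End E$ and on the two-term complex governing $\Hy$. Set $(\End E)_m := \bigoplus_p \Hom(E^{p,k-p},E^{p+m,k-p-m})$, so that $\End E = \bigoplus_m (\End E)_m$. Since each $\theta^{p,k-p}$ lowers the grading index by $1$, a direct computation shows that $\tad$ is homogeneous of weight $-1$: it sends $(\End E)_m$ into $(\End E)_{m-1}\otimes K_{X_0}$. Consequently the two-term complex decomposes as a direct sum of weight subcomplexes
$$\mathcal C_m^\bullet := \bigl[(\End E)_m \xrightarrow{\tad} (\End E)_{m-1}\otimes K_{X_0}\bigr],$$
and accordingly $\Hy = \bigoplus_m \mathbb H^1(\mathcal C_m^\bullet)$.

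Next I would observe that the sheaf morphism $\theta\colon T_{X_0}\to \End E$ lands in the weight $-1$ summand
$$(\End E)_{-1} = \bigoplus_{p=1}^k (E^{p,k-p})^{\vee}\otimes E^{p-1,k-p+1},$$
so $\theta_\ast$ factors through $\mathbb H^1(\mathcal C_{-1}^\bullet)\hookrightarrow \Hy$. Because $\Hy$ splits as above, the hypothesis $\theta_\ast\equiv 0$ forces the factored map $H^1(T_{X_0})\to \mathbb H^1(\mathcal C_{-1}^\bullet)$ to vanish.

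Finally, the canonical truncation of complexes $\mathcal C_{-1}^\bullet\twoheadrightarrow [(\End E)_{-1}\to 0]$ induces a natural map $\mathbb H^1(\mathcal C_{-1}^\bullet)\to H^1((\End E)_{-1})$. By functoriality of hypercohomology applied to the composite morphism of complexes $[T_{X_0}\to 0]\to \mathcal C_{-1}^\bullet \to [(\End E)_{-1}\to 0]$, the composition $H^1(T_{X_0})\to H^1((\End E)_{-1})$ agrees with $H^1$ of the sheaf map $\theta\colon T_{X_0}\to (\End E)_{-1}$; under the direct-sum decomposition of $(\End E)_{-1}$ this is precisely $\bigoplus_p \theta_\ast^{p,k-p}$. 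Since the first arrow in the composition is zero by the previous paragraph, every $\theta_\ast^{p,k-p}$ vanishes.

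There is no substantive obstacle: once the weight decomposition of the Dolbeault complex is established, the rest is a formal compatibility check. The only step deserving a little care is the verification that the weight grading on $\End E$ really does split the complex $[\End E\to \End E\otimes K_{X_0}]$, which reduces to the elementary bi-degree computation that $\tad$ is homogeneous of weight $-1$.
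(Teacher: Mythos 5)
Your proof is correct and rests on the same key observation as the paper's: $\eta(\theta)$ is concentrated in the weight $-1$ graded piece of $\End E$, so vanishing of the class can be tested componentwise. The paper implements this more directly — it takes a smooth primitive $g$ with $\bar\p g=\eta(\theta)$ and projects onto the components $g^{p,k-p}$ — whereas you package the same computation as a direct-sum decomposition of the Dolbeault complex into weight subcomplexes followed by truncation functoriality; both are valid and essentially equivalent.
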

\begin{proof} For any $[\eta]\in H^1(T_{X_0})$, if $[(0,\eta(\theta))]\in\Hy$ is a zero class. Then there exists $g\in \mathcal A^0(\End E)$ such that $$[\theta,g]=0\text{ and }\bar\p g=\eta(\theta).$$
Let $g^{p,k-p}:E^{p,k-p}\to E^{p-1,k-p+1}$  be the composition of $g|_{E^{p,k-p}}$ with $E\to E^{p-1,k-p+1}$. Then $[\eta(\theta^{p,k-p})]=[\bar\p g^{p,k-p}]=0\in H^1(E^{p,k-p\vee}\otimes E^{p-1,k-p+1})$.
\end{proof}
Let $\lambda$ be the coordinate of the space $\mathrm{Tot}(K_{X_0}).$ The spectral curve of $(E,\bar\p,\theta)$ is defined by $$\Sigma_\theta:=\{\lambda\in \mathrm{Tot}(K_{X_0}):\det(\lambda\cdot\mathrm{id}_E-\theta)=0\}.$$ When the spectral curve is reduced, by the BNR correspondence in \cite{BNR}, we have a torsion free rank 1 coherent sheaf $\mathcal F$ on $\Sigma_\theta$ such that \begin{align}\label{BNRc}(\pi_\ast\mathcal F,\pi_\ast (\lambda|_{\Sigma_\theta}))=(E,\theta).\end{align}
Besides, the BNR correspondence gives the following result on the Hitchin fibration (see \cite{BNR,Daniel})

\begin{lem}\label{Hitfib}
Let $(E,\bar\p,\theta)$ be a Higgs bundle with reduced (but could be reducible) spectral curve $\Sigma_\theta$ and eigenforms $(a_1,\cdots,a_n)\in \bigoplus\limits_{i=1}^n H^0(K_{X_0}^{\otimes i})$.
Then the fiber $h^{-1}(a_1,\cdots,a_n)$ of the Hitchin fibration $ h:\mathcal M_{\mathrm{Dol}}(X_0)\to \bigoplus\limits_{i=1}^n H^0(K_{X_0}^{\otimes i})
$ is the compactified generalized Jacobian of $\Sigma_\theta$, whose tangent space at $\mathcal F$ (defined in \eqref{BNRc}) is $\mathrm{Ext^1}(\mathcal F,\mathcal F)$. Since this compactified generalized Jacobian is a subvariety of the Dolbeault moduli space, we have $$\mathrm{Ext^1}(\mathcal F,\mathcal F)\hookrightarrow\Hy.$$
\end{lem}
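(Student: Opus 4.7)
The plan is to derive each of the three assertions in the statement—the identification of the Hitchin fiber with the compactified Jacobian, the computation of its tangent space, and the resulting embedding into $\mathbb H^1$—from the Beauville--Narasimhan--Ramanan correspondence \cite{BNR} together with standard deformation theory of coherent sheaves on projective curves, following \cite{Daniel}.

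First, I would invoke the BNR correspondence directly: once the characteristic polynomial $(a_1,\dots,a_n)$ is prescribed, the spectral curve $\Sigma_\theta\subset\mathrm{Tot}(K_{X_0})$ is fixed, and the relation \eqref{BNRc} produces a bijection between points of $h^{-1}(a_1,\dots,a_n)$ and isomorphism classes of torsion free rank one coherent sheaves $\mathcal F$ on $\Sigma_\theta$ of the appropriate degree. Reducedness of $\Sigma_\theta$ is exactly the hypothesis needed for BNR in the (possibly reducible) singular setting, and the resulting moduli space is by definition the compactified generalized Jacobian $\overline{\mathrm{Jac}}(\Sigma_\theta)$.

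Second, for the tangent space identification, I would apply the classical deformation theory of coherent sheaves on a projective scheme: since $\mathcal F$ is torsion free of rank one on the reduced curve $\Sigma_\theta$, it is simple, so the infinitesimal deformations of $\mathcal F$ with $\Sigma_\theta$ held fixed are classified by $\mathrm{Ext}^1_{\Sigma_\theta}(\mathcal F,\mathcal F)$. This is the content of the Altman--Kleiman construction of the compactified Jacobian, as recalled in \cite{Daniel}. The inclusion $\mathrm{Ext}^1(\mathcal F,\mathcal F)\hookrightarrow\Hy$ then follows because BNR realizes $\overline{\mathrm{Jac}}(\Sigma_\theta)$ as a closed subscheme of $\mathcal M_{\mathrm{Dol}}(X_0)$: the Zariski tangent map of a closed immersion is injective, and the target tangent space at $(E,\bar\p,\theta)$ is computed by the deformation complex $(\mathrm{End}E,\mathrm{ad}(\theta))$ as noted in the excerpt.

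The main subtlety, already handled inside the cited references, is the behaviour at the singular points of $\Sigma_\theta$: at a torsion free but not locally free $\mathcal F$, the tangent space is no longer $H^1(\mathcal O_{\Sigma_\theta})$ but the full $\mathrm{Ext}^1_{\Sigma_\theta}(\mathcal F,\mathcal F)$, and one should check the compatibility of this Ext-theoretic description with $\Hy$ through pushforward, i.e.\ through a natural map $\mathrm{Ext}^1_{\Sigma_\theta}(\mathcal F,\mathcal F)\to \mathbb H^1(X_0,\pi_\ast\sheafhom(\mathcal F,\mathcal F)\to \pi_\ast\sheafhom(\mathcal F,\mathcal F)\otimes K_{X_0})$ and the identification of the latter complex with $(\mathrm{End}E,\mathrm{ad}(\theta))$ coming from the BNR dictionary. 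Everything else is a citation.
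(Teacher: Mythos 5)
The paper offers no proof of this lemma at all --- it is stated as a direct citation of the BNR correspondence and of Schaub's work on compactified Jacobians of spectral curves --- and your reconstruction follows exactly the intended route: BNR identifies the Hitchin fiber with the compactified generalized Jacobian, deformation theory of sheaves on the fixed curve $\Sigma_\theta$ gives $\mathrm{Ext}^1(\mathcal F,\mathcal F)$ as its tangent space, and the closed embedding of the fiber into $\mathcal M_{\mathrm{Dol}}(X_0)$ injects this into $\Hy$. The only imprecision is your claim that torsion-freeness and rank one alone make $\mathcal F$ simple: on a reducible $\Sigma_\theta$ this is false in general (e.g.\ a direct sum of sheaves supported on different components) and one needs the stability of $\mathcal F$ inherited from that of $(E,\bar\p,\theta)$, but this does not affect the overall argument.
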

\subsection{A generic Higgs bundle}\label{sec_generic}
By Theorem \ref{thm-A} (ii), we can prove Theorem \ref{main B} by proving
\begin{proposition}\label{inje}Let $n\geq 2$. Let $(E,\bar\p,\theta)$ be a Higgs bundle over a compact Riemann surface $X_0$ of genus $g\geq 2$ and its spectral curve $\Sigma_{\theta}\subset \mathrm{Tot}(K_{X_0})$ is smooth. The induced map $\theta_\ast:H^1(X_0,T_{X_0})\to\Hy$ is injective. 
\end{proposition}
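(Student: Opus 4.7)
The plan is to use Serre duality to convert the injectivity of $\theta_\ast$ into the surjectivity of a very concrete projection on global sections, using the BNR description of $(E,\theta)$ on the spectral curve.

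First, the smoothness of $\Sigma_\theta$ lets BNR supply a line bundle $L$ on $\Sigma_\theta$ with $\pi_\ast L = E$ and $\theta$ given by multiplication by the tautological section $\eta\in H^0(\Sigma_\theta,\pi^\ast K_{X_0})$. I would then observe that multiplication by functions on $\Sigma_\theta$ yields a canonical embedding $\pi_\ast\mathcal O_{\Sigma_\theta}\hookrightarrow\End E$ which commutes with $\theta$, and hence lands in $\Ker(\tad)$. Moreover the sheaf map $\theta:T_{X_0}\to\End E$ factors as $T_{X_0}\xrightarrow{\cdot\eta}\pi_\ast\mathcal O_{\Sigma_\theta}\hookrightarrow\Ker(\tad)$. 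Combined with Lemma \ref{H1Ker} and the embedding $H^1(\Sigma_\theta,\mathcal O_{\Sigma_\theta})=\mathrm{Ext}^1(L,L)\hookrightarrow\Hy$ from Lemma \ref{Hitfib}, this produces the factorisation
$$H^1(X_0,T_{X_0})\xrightarrow{\,\theta_\ast'\,} H^1(\Sigma_\theta,\mathcal O_{\Sigma_\theta})\hookrightarrow\Hy,$$
reducing the problem to the injectivity of $\theta_\ast'$.

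Next, I would invoke the standard spectral description: as $\mathcal O_{X_0}$-modules there is a canonical decomposition
$$\pi_\ast\mathcal O_{\Sigma_\theta}\;\cong\;\mathcal O_{X_0}\oplus K_{X_0}^{-1}\oplus\cdots\oplus K_{X_0}^{-(n-1)},$$
with summands generated by $1,\eta,\eta^2,\ldots,\eta^{n-1}$, and dually (via Grothendieck--Serre duality for the finite morphism $\pi$) $\pi_\ast K_{\Sigma_\theta}\cong K_{X_0}\oplus K_{X_0}^{\otimes 2}\oplus\cdots\oplus K_{X_0}^{\otimes n}$. Under these identifications the sheaf map $\theta:T_{X_0}=K_{X_0}^{-1}\to\pi_\ast\mathcal O_{\Sigma_\theta}$ is simply the inclusion of the $K_{X_0}^{-1}$ summand, and its transpose is the projection $\pi_\ast K_{\Sigma_\theta}\twoheadrightarrow K_{X_0}^{\otimes 2}$ onto the $K_{X_0}^{\otimes 2}$ summand.

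Finally, Serre duality on $X_0$ translates injectivity of $\theta_\ast':H^1(X_0,T_{X_0})\to H^1(\Sigma_\theta,\mathcal O_{\Sigma_\theta})$ into surjectivity of its transpose on $H^0$,
$$H^0(\Sigma_\theta,K_{\Sigma_\theta})=\bigoplus_{i=1}^n H^0(X_0,K_{X_0}^{\otimes i})\twoheadrightarrow H^0(X_0,K_{X_0}^{\otimes 2}),$$
which is tautological since the target is a direct summand of the source. The main obstacle I anticipate is purely bookkeeping: identifying the embedding $H^1(\Sigma_\theta,\mathcal O_{\Sigma_\theta})\hookrightarrow\Hy$ supplied by Lemma \ref{Hitfib} with the one coming from multiplication $\pi_\ast\mathcal O_{\Sigma_\theta}\hookrightarrow\Ker(\tad)$, and checking compatibility of the canonical $\mathcal O_{X_0}$-module decomposition of $\pi_\ast\mathcal O_{\Sigma_\theta}$ with Grothendieck--Serre duality (especially in the presence of ramification of $\pi$). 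Both are standard features of spectral covers but warrant explicit verification.
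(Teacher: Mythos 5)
Your proposal is correct and follows essentially the same route as the paper: both reduce to the decomposition $\pi_\ast\mathcal O_{\Sigma_\theta}=\bigoplus_{i=0}^{n-1}T_{X_0}^{\otimes i}$ for a smooth spectral curve and the observation that $\theta$ includes $T_{X_0}$ as a direct summand. The only differences are minor: the paper embeds $H^1(\pi_\ast\mathcal O_{\Sigma_\theta})$ into $\Hy$ via the identification $\Ker(\tad)\cong\pi_\ast\mathcal O_{\Sigma_\theta}$ (citing Mochizuki) combined with Lemma \ref{H1Ker}, rather than via Lemma \ref{Hitfib} --- which spares it the compatibility check you flag at the end --- and it reads off injectivity directly from the resulting direct-sum decomposition of $H^1(\Ker(\tad))$, so your concluding Serre-duality dualization, while valid, is unnecessary.
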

\begin{proof}Firstly, $\theta_\ast$ factors through $H^1(\Ker (\tad))$ by Lemma \ref{H1Ker}. Note that by \cite[Proposition 2.4]{moch24}, we have
$$\Ker(\tad)\cong\pi_\ast\mathcal O_{\Sigma_\theta},$$ where $\pi:\Sigma_\theta\to X_0$ is the projection map. When $\Sigma_\theta$ is smooth, by \cite[Page 173]{BNR}, we have $$\pi_\ast\mathcal O_{\Sigma_\theta}=\mathcal O_{X_0}\oplus T_{X_0}\oplus\cdots\oplus T_{X_0}^{\otimes (n-1)}.$$ Thus $H^1(\Ker (\tad))=\bigoplus\limits_{i=0}^{n-1}H^1(T_{X_0}^{\otimes i})$. Hence $\theta_\ast$ is injective.
\end{proof}
\subsection{Rank 2 case}\label{sec_rk2}
By Theorem \ref{thm-A} (ii), we can prove Theorem \ref{main C} (i) by proving
\begin{proposition}\label{rk2}Let $(E,\bar\p,\theta)$ be a polystable rank 2 Higgs bundle over a compact Riemann surface $X_0$ of genus $g\geq 2$. If $\theta\ne0$, the induced map $\theta_\ast:H^1(X_0,T_{X_0})\to\Hy$ is always non-zero. 
\end{proposition}
\begin{proof}By Lemma \ref{redsl}, we may assume $\mathrm{tr}(\theta)$ is zero.

{\bf Case I:} $\mathrm{tr}(\theta^2)\ne0\in H^0(K_{X_0}^{\otimes2})$. We clearly have some $[\eta]\in H^1(T_{X_0})$ such that $[\eta\lrcorner \mathrm{tr}(\theta^2)]\ne0\in H^1(K_{X_0})$ by \eqref{pefp}.  
 Thus we can prove  $[(0,\eta(\theta))]\in\Hy$ is non-zero. Otherwise, there exists a smooth section $g\in\mathcal A^0(\End E)$ such that $$[\theta,g]=0\text{ and }\bar\p g=\eta(\theta),$$
and hence $[\eta\lrcorner \mathrm{tr}(\theta^2)]=[\mathrm{tr}(\theta\cdot\eta(\theta))]=[\bar\p(\mathrm{tr}(\theta\cdot g))]=0\in H^1(K_{X_0})$, which is a contradiction.

{\bf Case II:} $\det\theta=-\cc \mathrm{tr}(\theta^2)=0$, i.e. the nilpotent case. By Lemma \ref{limcase}, we just need to prove this proposition for non-unitary graded Higgs bundles. Then there exists a holomorphic line bundle $L$ such that $(E=L\oplus L^{-1},\theta=\begin{pmatrix}0&\alpha\\0&0\end{pmatrix})$ with $-(g-1)\leq\deg L\leq 0$ and  $\alpha\ne0\in H^0(L^{\otimes 2}\otimes K_{X_0})$. 

By Lemma \ref{crigra}, we just need to prove that $\alpha_{\ast}:H^1(T_{X_0})\to H^1(L^{\otimes 2})$ is not identically zero. Since $\deg(L^{\otimes-2}\otimes K_{X_0})\geq 2g-2$, $H^0(L^{\otimes-2}\otimes K_{X_0})\ne0$. Note that for any non-zero $\beta\in H^0(L^{\otimes-2}\otimes K_{X_0})$, we have $0\ne \alpha\cdot\beta\in H^0(K_{X_0}^{\otimes 2})$. Thus by \eqref{pefp}, there exists $[\eta]\in H^1(T_{X_0})$ such that $[\eta(\alpha\cdot\beta)]\ne0$ in $H^1(K_{X_0})$. By Serre duality, we must have $[\eta(\alpha)]\ne0$ in $H^1(L^{\otimes 2})$.
\end{proof}

\subsection{Rank 3 case}\label{sec_rk3}
By Theorem \ref{thm-A} (ii), we can prove Theorem \ref{main C} (ii) by proving
\begin{proposition}\label{rk3}Let $(E,\bar\p,\theta)$ be a stable rank 3 Higgs bundle over a compact Riemann surface $X_0$ of genus $g\geq 2$. If $\theta\ne0$, the induced map $\theta_\ast:H^1(X_0,T_{X_0})\to\Hy$ is always non-zero. 
\end{proposition}
\begin{proof}By Lemma \ref{redsl}, we may assume $\mathrm{tr}(\theta)$ is zero.

{\bf Case I:} $\mathrm{tr}(\theta^2)\ne0\in H^0(K_{X_0}^{\otimes2})$. We have some $[\eta]\in H^1(T_{X_0})$ such that $[\eta\lrcorner \mathrm{tr}(\theta^2)]\ne0\in H^1(K_{X_0})$ by \eqref{pefp}. Thus we can prove  $[(0,\eta(\theta))]\in\Hy$ is non-zero by the same argument as the proof of {\bf Case I} in  Proposition \ref{rk2}.

{\bf Case II:} $\mathrm{tr}(\theta^2)=0,\ \det\theta\ne0$. Let $\Sigma_\theta$ be its spectral curve, which is reduced but could be reducible. By \eqref{BNRc}, we have a torsion free rank 1 coherent sheaf $\mathcal F$ on $\Sigma_\theta$ such that $\pi_\ast\mathcal F=E$ as sheaves on $X_0$. 

\noindent {\bf Claim:} $\theta_\ast$ factors through $H^1(\Sigma_\theta,\mathscr{H}\!om(\mathcal F,\mathcal F))$.
\begin{proof}[Proof of the Claim]

Note that $\theta:T_{X_0}\to \pi_\ast\mathscr{H}\!om(\mathcal F,\mathcal F)\to\End E $ induces $$\theta_\ast:H^1(X_0,T_{X_0})\to H^1(X_0,\pi_\ast\mathscr{H}\!om(\mathcal F,\mathcal F))=H^1(\Sigma_\theta,\mathscr{H}\!om(\mathcal F,\mathcal F)).$$
By Lemma \ref{A1} and Lemma \ref{Hitfib}, we have $$H^1(\Sigma_\theta,\mathscr{H}\!om(\mathcal F,\mathcal F))\hookrightarrow \mathrm{Ext}^1(\mathcal F,\mathcal F)\hookrightarrow\Hy.$$ This completes the proof of this claim.\end{proof}

Let $\nu:\tilde\Sigma_\theta\to\Sigma_\theta$ be the normalization of $\Sigma_\theta$. By Proposition \ref{normendo}, we have $$T_{X_0}\stackrel{\theta}{\rightarrow}\pi_\ast \mathscr{H}\!om(\mathcal F,\mathcal F)\stackrel{\iota}{\hookrightarrow} (\pi\circ\nu)_\ast\mathcal O_{\tilde\Sigma_\theta}.$$
So it suffices to prove that the induced map $(\iota\circ\theta)_\ast:H^1(T_{X_0})\to H^1((\pi\circ\nu)_\ast\mathcal O_{\tilde\Sigma_\theta})$ is non-zero.
In this case, the spectral cover $\pi:\Sigma_\theta\to X_0$ is a cyclic cover of degree $3$ defined by $\det\theta\in H^0(K_{X_0}^{\otimes 3})$.
Let $D:=\mathrm{div}(\det\theta)$. By \cite[Corollary 3.11]{EV}, we have the following isomorphism \begin{align} (\pi\circ\nu)_\ast\mathcal O_{\tilde \Sigma_\theta}=\bigoplus\limits_{i=0}^{2}\mathcal L^{(i)^{-1}},\end{align}
where $\mathcal L^{(i)^{-1}}=T_{X_0}^{\otimes i}([\frac{i}{3}\cdot D]). $ So we have the following short exact sequence
$$0\to T_{X_0}\stackrel{\iota\circ\theta}{\rightarrow}\bigoplus\limits_{i=0}^{2}\mathcal L^{(i)^{-1}}\to \mathcal O_{X_0}\oplus \mathcal O_{[\frac{1}{3}\cdot D]}\oplus \mathcal L^{(2)^{-1}}\to 0$$
Taking the cohomology and by Lemma \ref{H1Ker}, we have
$$0\to \bigoplus\limits_{i=0}^{2}H^0(\mathcal L^{(i)^{-1}})\to H^0(\mathcal O_{X_0}\oplus \mathcal O_{[\frac{1}{3}\cdot D]}\oplus \mathcal L^{(2)^{-1}})\to H^1(T_{X_0})\stackrel{(\iota\circ\theta)_\ast}{\rightarrow} H^1(\Ker(\tad))\to\cdots$$
If $\dim H^0(\mathcal O_{[\frac{1}{3}\cdot D]})=\deg([\frac{1}{3}\cdot D])< \dim H^1(T_{X_0})$, $(\iota\circ\theta)_\ast$ is not a zero map. But $\deg D=6g-6$, and thus $\deg([\frac{1}{3}\cdot D])< \dim H^1(T_{X_0})$.

{\bf Case III:} $\mathrm{tr}(\theta^2)=0,\ \det\theta=0$, i.e. the nilpotent case. By Lemma \ref{limcase}, we just need to prove this proposition for non-unitary graded Higgs bundles. And we will prove it case by case.

(i) There exists a rank 2 holomorphic vector bundle $F$ and a line bundle $L$ such that $E=F\oplus L, \theta|_F=\alpha\in H^0(F^\vee\otimes L\otimes K_{X_0})\backslash\{0\}$ and $\theta|_L\equiv 0$. Thus $\deg F=-\deg L>0$. Since $L$ is a line bundle, there exists a sub line bundle $L_1$ of $F$ such that $\theta|_{L_1}\equiv 0$, which implies $\deg L_1<0$. Then we have the following short exact sequence
$$0\to L_1\stackrel{i}{\to} F\to Q\to 0,$$
where $Q$ is the quotient line bundle. After tensoring this sequence with $L^\vee\otimes K_{X_0}$ and taking the cohomology, we have $$0\to H^0(L_1\otimes L^\vee\otimes K_{X_0})\stackrel{i_\ast}{\to} H^0(F\otimes L^\vee\otimes K_{X_0})\to H^0(Q\otimes L^\vee\otimes K_{X_0})\to \cdots.$$
By the Riemann-Roch theorem, we have \begin{align}\label{ineq1}h^0(F\otimes L^\vee\otimes K_{X_0})\geq 2g-2+\deg F+2\deg L^\vee>0.\end{align} We prove that the above $i_\ast:H^0(L_1\otimes L^\vee\otimes K_{X_0})\to H^0(F\otimes L^\vee\otimes K_{X_0})$ is not surjective. 
\begin{itemize}\item If $\deg(L_1\otimes L^\vee)\leq1-2g$. Then $H^0(L_1\otimes L^\vee\otimes K_{X_0})=0$ and $i_\ast$ is clearly not surjective. 
\item If $\deg(L_1\otimes L^\vee)\in[2-2g,0]$. By the Riemann-Roch theorem and the Clifford's inequality, we have $$h^0(L_1\otimes L^\vee\otimes K_{X_0})=h^0(L_1^\vee\otimes L)+\deg(L_1\otimes L^\vee\otimes K_{X_0})-(g-1)\leq g+\cc\deg(L^\vee\otimes L_1).$$
By \eqref{ineq1} and $\deg L_1<0$, we have $h^0(L_1\otimes L^\vee\otimes K_{X_0})<h^0(F\otimes L^\vee\otimes K_{X_0}).$
\item If $\deg(L_1\otimes L^\vee)>0$. By the Riemann-Roch theorem, we have $$h^0(L_1\otimes L^\vee\otimes K_{X_0})=\deg(L_1\otimes L^\vee\otimes K_{X_0})-(g-1).$$
By \eqref{ineq1} and $\deg L_1<0$, we have $h^0(L_1\otimes L^\vee\otimes K_{X_0})<h^0(F\otimes L^\vee\otimes K_{X_0}).$
\end{itemize}
Then there exists $\beta\in H^0(F\otimes L^\vee\otimes K_{X_0})$ such that $\alpha\cdot\beta\ne0\in H^0(K_{X_0}^{\otimes2})$. Hence  there exists $[\eta]\in H^1(T_{X_0})$ such that $[\eta(\alpha\cdot\beta)]\ne0\in H^1(K_{X_0})$, which implies $[\eta(\alpha)]\ne0\in H^1(F^\vee\otimes L)$. By Lemma \ref{crigra}, we must have $\theta_\ast\ne0$ in this case.

(ii) There exists a rank 2 holomorphic vector bundle $F$ and a line bundle $L$ such that $E=F\oplus L, \theta|_L=\alpha\in H^0(F\otimes L^\vee\otimes K_{X_0})\backslash\{0\}$ and $\theta|_F\equiv 0$. Thus $-\deg F=\deg L>0$. Since $L$ is a line bundle, there exists a sub line bundle $L_1$ of $F$ such that $\theta|_{L}$ factors through $L_1\otimes K_{X_0}$. Since $\theta|_{L_1}\equiv 0$, we also have $\deg L_1<0$. We have the following short exact sequence
$0\to L_1\to F\to Q\to 0,$ 
where $Q$ is the quotient line bundle such that $\deg Q>\deg F$. Then we have $$0\to Q^\vee\otimes L\otimes K_{X_0}\stackrel{j}{\to}F^\vee\otimes L\otimes K_{X_0}\to L_1^\vee\otimes L\otimes K_{X_0}\to 0.$$
By taking the cohomology, we have $$0\to H^0(Q^\vee\otimes L\otimes K_{X_0})\stackrel{j_\ast}{\to} H^0(F^\vee\otimes L\otimes K_{X_0})\to H^0(L_1^\vee\otimes L\otimes K_{X_0})\to \cdots.$$
By a similar argument as in (i), we can prove that the above $j_\ast:H^0(Q^\vee\otimes L\otimes K_{X_0}){\to} H^0(F^\vee\otimes L\otimes K_{X_0})$ is not surjective. Then we can prove $\theta_\ast\ne0$ using the fact that $j_\ast$ is not surjective.

(iii) There exist holomorphic line bundles $L_1,L_2$ and $L_3$ such that $E=L_1\oplus L_2\oplus L_3$ and $\theta|_{L_1}=\alpha\in H^0(L_1^\vee\otimes L_2\otimes K_{X_0}), \theta|_{L_2}=\beta\in H^0(L_2^\vee\otimes L_3\otimes K_{X_0})$ and $\theta|_{L_3}\equiv 0$. Since $\theta\ne0$, we may assume both $\alpha$ and $\beta$ are non-zero (if one of them is zero, we go back to (i) or (ii)). We claim that either $H^0(L_1\otimes L_2^\vee\otimes K_{X_0})$ or $H^0(L_2\otimes L_3^\vee\otimes K_{X_0})$ is non-zero. Note that $\deg L_3<0$ and $\deg L_1+\deg L_2+\deg L_3=0$. Thus $\deg L_1-\deg L_2$ and $\deg L_2-\deg L_3$ cannot be both negative (otherwise $\deg L_1<\deg L_2<\deg L_3<0)$. This completes the proof of our claim. Now we prove that $\theta_\ast\ne0$ in this case.
\begin{itemize}
\item If $H^0(L_1\otimes L_2^\vee\otimes K_{X_0})\ne0$.  Then there exists $\gamma\ne0\in H^0(L_1\otimes L_2^\vee\otimes K_{X_0})$ such that $\alpha\cdot\gamma\ne0\in H^0(K_{X_0}^{\otimes 2})$, which implies there exists $[\eta]\in H^1(T_{X_0})$ such that $[\eta(\alpha)]\ne0\in H^1(L_1^\vee\otimes L_2)$. Hence $\theta_\ast\ne0$ by Lemma \ref{crigra}.
\item If $H^0(L_2\otimes L_3^\vee\otimes K_{X_0})\ne0$. We can prove $\theta_\ast\ne0$ similarly.\qedhere
\end{itemize}
\end{proof}
\begin{remark} When $g\geq 3$, one can verify that the proof of Proposition \ref{rk3} works for polystable non-unitary rank 3 Higgs bundles.
\end{remark}

\subsection{Global holomorphicity implies unitary or nilpotent}\label{sec_globolhol}

We keep the notation of this section.  Thus $X_0$ is a compact Riemann surface of genus
$g\geq 2$, $(E,\bar\partial,\theta)$ is a degree-zero polystable Higgs bundle on $X_0$, and
\[
\sigma:\mathcal T_g\longrightarrow
\mathcal M_{\mathrm{Dol}}(\mathcal X/\mathcal T_g)
\]
is its isomonodromic deformation.  

By Lemma~\ref{redsl}, global holomorphicity also forces $\mathrm{tr}(\theta)=0$.  Hence, we may work with the trace-free
endomorphism bundle $\End_0E$ and the trace-free centralizer
\begin{equation}\label{global-Ztheta}
Z_\theta:=\Ker\!\left(\mathrm{ad}(\theta):\End_0E\longrightarrow
\End_0E\otimes K_{X_0}\right).
\end{equation}
The trace-free deformation complex is a direct summand of the full deformation complex, and
the proof of Lemma~\ref{H1Ker} gives an injection
\begin{equation}\label{global-centralizer-injection}
H^1(X_0,Z_\theta)\hookrightarrow
\mathbb H^1\bigl(X_0,(\End_0E,\mathrm{ad}(\theta))\bigr).
\end{equation}
When $\mathrm{tr}(\theta)=0$, we denote by
\begin{equation}\label{global-centralizer-map}
\widetilde\theta_*:H^1(X_0,T_{X_0})\longrightarrow H^1(X_0,Z_\theta)
\end{equation}
the map induced by $\theta:T_{X_0}\to Z_\theta$.  Thus
$\theta_*$ factors through \eqref{global-centralizer-map}, and the injection
\eqref{global-centralizer-injection} implies
\begin{equation}\label{global-rank-centralizer-map}
\operatorname{rank}\theta_*=\operatorname{rank}\widetilde\theta_*.
\end{equation}

\subsubsection{Clifford type estimate and a numerical contraction theorem}
Let $f:Y\to X_0$ be a finite nonconstant morphism between connected smooth projective curves,
of degree $d$, and let
\[
0\neq w\in H^0(Y,f^*K_{X_0}).
\]
Define the contraction map
\begin{equation}\label{global-contraction-map}
\Phi_w:H^1(X_0,T_{X_0})\longrightarrow H^1(Y,\mathcal O_Y),
\qquad [\mu]\longmapsto[(f^*\mu)\lrcorner w].
\end{equation}
\noindent\textbf{Question:} Is this $\Phi_w$ a nonzero map? We will prove this holds under some numerical condition in Theorem~\ref{global-numerical-contraction} and use it to study Conjecture~\ref{conj-F}.

\vspace{.3cm}

Put $A=f_*\mathcal O_Y$.  Finite duality and the projection formula identify $w$ with a
section $s_w\in H^0(X_0,A\otimes K_{X_0})$, and the Serre dual of \eqref{global-contraction-map}
is
\begin{equation}\label{global-dual-contraction}
m_w:H^0(X_0,A^\vee\otimes K_{X_0})\longrightarrow
H^0(X_0,K_{X_0}^{\otimes2}),\qquad u\longmapsto u(s_w).
\end{equation}
Equivalently, under $f_*K_Y\simeq A^\vee\otimes K_{X_0}$, it is the trace-product map
$\eta\mapsto\mathrm{tr}_{Y/X_0}(w\eta)$.  Therefore
\begin{equation}\label{global-rank-duality}
\operatorname{rank}\Phi_w=\operatorname{rank}m_w.
\end{equation}

The following Clifford type result is due to Landesman and Litt \cite{LL}.

\begin{lemma}[Clifford-type multiplication estimate]\label{global-LL-lemma}
Let $Q$ be a nonzero semistable vector bundle on $X_0$ with $\mu(Q)\leq0$, and let
$0\neq v\in H^0(X_0,Q\otimes K_{X_0})$.  Set
\[
M_v:H^0(X_0,Q^\vee\otimes K_{X_0})\longrightarrow
H^0(X_0,K_{X_0}^{\otimes2}),\qquad u\longmapsto u(v).
\]
Then
\begin{equation}\label{global-LL-weak}
\operatorname{rank}M_v\geq g-\operatorname{rank}Q.
\end{equation}
If $\mu(Q)<0$, then
\begin{equation}\label{global-LL-strict}
\operatorname{rank}M_v>g-\operatorname{rank}Q.
\end{equation}
\end{lemma}

\begin{proof}
The bundle $Q^\vee\otimes K_{X_0}$ is semistable of slope
$2g-2-\mu(Q)\geq2g-2$.  The section $v$ induces a nonzero morphism
$Q^\vee\otimes K_{X_0}\to K_{X_0}^{\otimes2}$ whose kernel is a subbundle $U$.
Moreover,
\[
h^0(Q^\vee\otimes K_{X_0})-h^0(U)=\operatorname{rank}M_v.
\]
Apply \cite[Proposition 5.2.4]{LL} with empty parabolic divisor, $c=1$, and
$\delta=\operatorname{rank}M_v$.  Part (II) gives \eqref{global-LL-weak} at slope $2g-2$,
and part (I) gives \eqref{global-LL-strict} above that slope.
\end{proof}

\begin{lemma}\label{global-direct-image-slopes}The unit and trace maps
give a splitting
\begin{equation}\label{global-trace-splitting}
A=\mathcal O_{X_0}\cdot1\oplus A_0,
\qquad A_0:=\Ker(\mathrm{tr}_{Y/X_0}),
\qquad \operatorname{rank}A_0=d-1,
\end{equation}
and every subbundle of $A_0$ must have slope at most zero.
\end{lemma}

\begin{proof}The splitting \eqref{global-trace-splitting} holds by definition.
 By the Harder-Narasimhan theorem, it suffices to prove the claim for any semistable subbundle.

Suppose that a semistable bundle $F$ of positive slope admits a nonzero map
$F\to f_*\mathcal O_Y$.  By adjunction there is a nonzero map $f^*F\to\mathcal O_Y$.
Finite pullback preserves semistability, so $f^*F$ is semistable of
positive slope, which cannot map nontrivially to the slope-zero line bundle $\mathcal O_Y$.
The assertion for $A_0$
follows because it is a direct summand.
\end{proof}

\begin{theorem}[Numerical contraction theorem]\label{global-numerical-contraction}
For every finite map $f:Y\to X_0$ of degree $d$ and every nonzero
$w\in H^0(Y,f^*K_{X_0})$,
\begin{equation}\label{global-contraction-bound}
\operatorname{rank}\Phi_w\geq\max\{0,g-d+1\}.
\end{equation}
In particular, if $d\leq g$, then $\Phi_w\neq0$.
\end{theorem}

\begin{proof}
Using \eqref{global-trace-splitting}, write
\[
s_w=\alpha\cdot1+s_w^0,
\qquad \alpha\in H^0(X_0,K_{X_0}),
\qquad s_w^0\in H^0(X_0,A_0\otimes K_{X_0}).
\]
If $\alpha\neq0$, the restriction of $m_w$ to the dual of the trivial summand is
multiplication by $\alpha$,
\[
H^0(X_0,K_{X_0})\longrightarrow H^0(X_0,K_{X_0}^{\otimes2}),
\qquad\beta\longmapsto\alpha\beta,
\]
which is injective.  Hence $\operatorname{rank}\Phi_w\geq g$ by
\eqref{global-rank-duality}.

Assume now that $\alpha=0$.  Let
\[
0=F_0\subset F_1\subset\cdots\subset F_m=A_0
\]
be the Harder--Narasimhan filtration, with semistable quotients $Q_i=F_i/F_{i-1}$. Choose the
least $j$ such that $s_w\in H^0(X_0,F_j\otimes K_{X_0})$, and let
$0\neq v\in H^0(X_0,Q_j\otimes K_{X_0})$ be its image.  By the slope property of the 
Harder--Narasimhan filtration, $H^0(X_0,A_0/F_j)=0$.  By Serre duality,
$H^1(X_0,(A_0/F_j)^\vee\otimes K_{X_0})=0$, and hence
\[
H^0(X_0,A_0^\vee\otimes K_{X_0})\twoheadrightarrow
H^0(X_0,F_j^\vee\otimes K_{X_0}).
\]
Since $Q_j^\vee\hookrightarrow F_j^\vee$, every section occurring in $M_v$ lifts to the
domain of $m_w$.  Thus $\operatorname{rank}M_v\leq\operatorname{rank}m_w$.  Applying
Lemma~\ref{global-LL-lemma} gives
\[
\operatorname{rank}m_w\geq g-\operatorname{rank}Q_j
\geq g-(d-1)=g-d+1.
\]
Together with \eqref{global-rank-duality}, this proves \eqref{global-contraction-bound}.
\end{proof}
\begin{remark}We expect this Theorem~\ref{global-numerical-contraction} holds for any nonzero $w$ and $g(X_0)\geq 3$ without the numerical condition $d\leq g=g(X_0)$. For the genus $g=2$ case, there exists a counterexample with $d=36$ given in Section~\ref{sec_counterexample}.
\end{remark}

\subsubsection{Reduced spectral curves}
Assume that the spectral curve $\pi:\Sigma_\theta\to X_0$ is reduced. By the BNR correspondence, there is a rank-one torsion-free sheaf $\mathcal F$ on
$\Sigma_\theta$ such that
\[
(E,\theta)\simeq\pi_*(\mathcal F,\lambda),
\]
where $\lambda$ is the tautological one-form; see \cite{BNR,Daniel}.

Let $\Sigma_1,\ldots,\Sigma_r$ be the irreducible components which are not the zero section.
Let $\nu_i:Y_i\to\Sigma_i$ be the normalization and put
\begin{equation}\label{global-spectral-component-data}
f_i:=\pi\circ\nu_i:Y_i\longrightarrow X_0,
\qquad d_i:=\deg f_i,
\qquad w_i:=\nu_i^*\lambda\in H^0(Y_i,f_i^*K_{X_0}).
\end{equation}
Then $w_i\neq0$.

\begin{lemma}[Spectral-component factorization]\label{global-spectral-factorization}
Assume $\mathrm{tr}(\theta)=0$.  For every nonzero component $\Sigma_i$, there is a natural morphism $q_i:Z_\theta\longrightarrow(f_i)_*\mathcal O_{Y_i}$ such that $(q_i\otimes\operatorname{id}_{K_{X_0}})(\theta)=w_i$.  Taking first cohomology, we get
$\Phi_{w_i}=H^1(q_i)\circ\widetilde\theta_*.$ Consequently,
\begin{equation}\label{global-spectral-rank-comparison}
\operatorname{rank}\theta_*\geq\operatorname{rank}\Phi_{w_i}.
\end{equation}
\end{lemma}

\begin{proof}The spectral module description identifies
\[
\Ker\!\left(\mathrm{ad}(\theta):\End E\to\End E\otimes K_{X_0}\right)\simeq\pi_*\mathscr{H}\!om_{\Sigma_\theta}(\mathcal F,\mathcal F).
\]
By Proposition~\ref{normendo}, normalization gives
\[
\mathscr{H}\!om_{\Sigma_\theta}(\mathcal F,\mathcal F)
\hookrightarrow\nu_*\mathcal O_{\widetilde\Sigma_\theta}.
\]
Projecting to the component $Y_i$, pushing forward to $X_0$, and restricting to the
trace-free summand gives $q_i$. Hence $(q_i\otimes\operatorname{id}_{K_{X_0}})(\theta)=w_i$. For any $\mu$ in $H^1(X_0,T_{X_0})$, we have
\[
H^1(q_i)(\widetilde\theta_*[\mu])
=\bigl[(f_i^*\mu)\lrcorner w_i\bigr]
=\Phi_{w_i}([\mu]).
\]
The estimate 
\eqref{global-spectral-rank-comparison} follows from
\eqref{global-rank-centralizer-map}.
\end{proof}

Let $\epsilon=1$ if the zero section is an irreducible component of $\Sigma_\theta$, and let
$\epsilon=0$ otherwise.  Reducedness implies
\begin{equation}\label{global-component-degree-sum}
n:=\mathrm{rank}E=\epsilon+\sum_{i=1}^r d_i.
\end{equation}

\begin{theorem}[Reduced-spectrum non-unitary theorem]
\label{global-reduced-spectrum-theorem}
Assume that the spectral curve $\Sigma_\theta$ is reduced and that $n\leq g$.  If
$(E,\bar\partial,\theta)$ is non-unitary, then its isomonodromic deformation over
$\mathcal T_g$ is not globally holomorphic.
\end{theorem}

\begin{proof}
Suppose, to the contrary, that the isomonodromic section is globally holomorphic.  By
Theorem~\ref{thm-A}(ii) and Lemma~\ref{redsl}, one has $\theta_*=0$ and
$\mathrm{tr}(\theta)=0$.  Since the Higgs bundle is non-unitary, $\theta\neq0$.  Hence the
reduced spectral curve has a component $\Sigma_i$ which is not the zero section.  Its degree
satisfies $d_i\leq n\leq g$.  Theorem~\ref{global-numerical-contraction} gives
$\Phi_{w_i}\neq0$, while Lemma~\ref{global-spectral-factorization} gives
\[
\operatorname{rank}\theta_*\geq\operatorname{rank}\Phi_{w_i}>0,
\]
a contradiction.
\end{proof}

\subsubsection{Any case without reduced assumption}
We impose no condition on the spectral curve.

\begin{lemma}\label{global-max-centralizer}
If $0\neq A\in\mathfrak{sl}_n(\mathbb C)$, then the dimension of its centralizer algebra satisfies
\begin{equation}\label{global-max-centralizer-bound}
\dim Z_{\mathfrak{sl}_n}(A)\leq(n-1)^2.
\end{equation}
\end{lemma}

\begin{proof}
For a Jordan form, the dimension of the centralizer in $\mathfrak{gl}_n$ is
\[
\sum_a\sum_j\bigl(m_j^{(a)}\bigr)^2,
\]
where $m_j^{(a)}$ denotes the number of Jordan blocks with eigenvalue
$a$ whose size is at least $j$.  Among
non-scalar matrices this is at most $(n-1)^2+1$.  Since the identity belongs to the
$\mathfrak{gl}_n$-centralizer, intersecting with $\mathfrak{sl}_n$ lowers the dimension by one.
\end{proof}

\begin{theorem}[Non-unitary theorem]\label{global-centralizer-theorem}
Assume $\mathrm{tr}(\theta)=0$ and $\theta\neq0$.  Then
\begin{equation}\label{global-centralizer-bound}
\operatorname{rank}\theta_*\geq\max\{0,g-\operatorname{rank}Z_\theta\}.
\end{equation}
Consequently, if
\begin{equation}\label{global-unitary-threshold-equation}
g>(n-1)^2,
\end{equation}
then every non-unitary rank-$n$ Higgs bundle has non-holomorphic isomonodromic deformation
 over $\mathcal T_g$.
\end{theorem}

\begin{proof}
The adjoint Higgs bundle $(\End_0E,\mathrm{ad}(\theta))$ is polystable of degree zero; see
\cite{Simp92}. Thus the slope of every subbundle of
$Z_\theta$ is at most zero.

By Serre duality, the dual of \eqref{global-centralizer-map} is
\begin{equation}\label{global-centralizer-multiplication}
m_\theta:H^0(X_0,Z_\theta^\vee\otimes K_{X_0})\longrightarrow
H^0(X_0,K_{X_0}^{\otimes2}),\qquad u\longmapsto u(\theta),
\end{equation}
and \eqref{global-rank-centralizer-map} gives
$\operatorname{rank}\theta_*=\operatorname{rank}m_\theta$.

Choose the first Harder--Narasimhan quotient $Q_j$ of $Z_\theta$ which detects
$\theta\in H^0(X_0,Z_\theta\otimes K_{X_0})$, and let
$0\neq v\in H^0(X_0,Q_j\otimes K_{X_0})$ be its image.  The lifting argument used in the
proof of Theorem~\ref{global-numerical-contraction} gives
\[
\operatorname{rank}m_\theta\geq\operatorname{rank}M_v
\geq g-\operatorname{rank}Q_j\geq g-\operatorname{rank}Z_\theta
\]
by Lemma~\ref{global-LL-lemma}.  This proves \eqref{global-centralizer-bound}.

Now assume \eqref{global-unitary-threshold-equation} and suppose that a non-unitary Higgs
bundle had globally holomorphic isomonodromic deformation. 
Theorem~\ref{thm-A}(ii) and Lemma~\ref{redsl} would give $\theta_*=0$, $\mathrm{tr}(\theta)=0$, and $\theta\neq0$.
Applying Lemma~\ref{global-max-centralizer} gives
$\operatorname{rank}Z_\theta\leq(n-1)^2<g$, whereas \eqref{global-centralizer-bound} gives $\theta_*\neq0$.
\end{proof}

The nilpotency conclusion admits a better threshold than the vanishing conclusion above.

\begin{lemma}\label{global-quadratic-centralizer}
Let $A\in\mathfrak{sl}_n(\mathbb C)$ be non-nilpotent and satisfy $\mathrm{tr}(A^2)=0$.  Then
\begin{equation}\label{global-quadratic-centralizer-bound}
\dim Z_{\mathfrak{sl}_n}(A)\leq(n-2)^2+1.
\end{equation}
\end{lemma}

\begin{proof}
For $n=2$ there is no such matrix.  For $n\geq3$, let $a_1,\ldots,a_s$ be the distinct
eigenvalues and let $n_i$ be the dimensions of the corresponding generalized eigenspaces.
There cannot be only one eigenvalue, since $A$ is trace free and non-nilpotent.
There cannot be exactly two eigenvalues either by the fact $\operatorname{tr}(A)=\operatorname{tr}(A^2)=0$.  Hence $s\geq3$.  The $\mathfrak{gl}_n$-centralizer has dimension at
most
\[
\sum_{i=1}^s n_i^2\leq(n-2)^2+2,
\]
the maximum occurring for $(n-2,1,1)$.  Passing to $\mathfrak{sl}_n$ proves \eqref{global-quadratic-centralizer-bound}.
\end{proof}

\begin{theorem}[Non-nilpotent theorem]\label{global-nilpotency-threshold}If
\begin{equation}\label{global-nilpotency-threshold-equation}
g>(n-2)^2+1,
\end{equation}
every non-nilpotent rank-$n$ Higgs bundle has non-holomorphic isomonodromic
deformation.
\end{theorem}

\begin{proof}
Suppose that the deformation is globally holomorphic.  
Theorem~\ref{thm-A}(ii) and Lemma~\ref{redsl} give $\theta_*=0$ and $\mathrm{tr}(\theta)=0$.  By a similar argument as in the proof of 
Proposition~\ref{rk2}, $\mathrm{tr}(\theta^2)=0$.  If $\theta$ were
non-nilpotent, Lemma~\ref{global-quadratic-centralizer} would give
$\operatorname{rank}Z_\theta\leq(n-2)^2+1<g$.  Theorem~\ref{global-centralizer-theorem} would then imply
$\theta_*\neq0$, a contradiction.
\end{proof}

\subsection{A genus-two counterexample to Conjecture~\ref{conj-F}}\label{sec_counterexample}
\label{genus-two-conjecture-F-counterexample}
 The following construction is essentially due to Bogomolov--Tschinkel and Landesman--Litt; see \cite{LLPW}. Let
$\pi:\mathcal X\to\mathcal T_2$ be the universal family. There exists a finite \mbox{\'{e}tale} cover $q:\mathcal Y\to\mathcal X$ of degree $36$ such that we have another finite cover
\[
h:\mathcal Y\longrightarrow E_0\times\mathcal T_2,
\]
where $E_0$ is a fixed elliptic curve, and every
fiber map $h_t:Y_t\to E_0$ has degree $4$; see
\cite[Proposition~6.7 and Theorem~6.8]{LLPW}.

Choose $0\neq\alpha\in H^0(E_0,K_{E_0})$ and set
\[
w=h^*\alpha\in H^0(\mathcal Y,K_{\mathcal Y/\mathcal T_2}).
\]
Since $q$ is relative \mbox{\'{e}tale}, multiplication by $w$ defines the relative Higgs bundle
\begin{equation}\label{degree-36-relative-Higgs}
(\mathcal E,\Theta)
:=\bigl(q_*\mathcal O_{\mathcal Y},\ q_*m_w\bigr)
\end{equation}
of rank $36$ on $\mathcal X/\mathcal T_2$.

\begin{proposition}
\label{degree-36-conjecture-F-counterexample}
The family \eqref{degree-36-relative-Higgs} is a holomorphic isomonodromic family of
polystable degree-zero Higgs bundles, and every Higgs field $\Theta_t$ is non-nilpotent.
Consequently, Conjecture~\ref{conj-F} fails for $g=2$.
\end{proposition}

\begin{proof}
The form $\alpha$ determines a fixed rank-one representation of $\pi_1(E_0)$.  Its pullback by
$h_t$ is constant under the markings of the family $\mathcal Y/\mathcal T_2$ by integration, denoted by $\chi$.  If
$H=\pi_1(Y_t)\subset G=\pi_1(X_t)$ is the subgroup determined by the degree-$36$ covering,
the direct-image local system has monodromy $\operatorname{Ind}_H^G(\chi),$ which is independent of $t$.  Hence the family is isomonodromic.  

The induced representation
is semisimple, so the corresponding Higgs bundles are polystable of degree zero by the
Hitchin--Simpson correspondence.  On the other hand, $\mathcal E=q_*\mathcal O_{\mathcal Y}$
and $\Theta=q_*m_w$ are holomorphic over $\mathcal T_2$, so its Dolbeault section is
holomorphic.

Finally, on an open set over which $q_t$ splits into $36$ sheets,
\[
\Theta_t=\operatorname{diag}(w_{t,1},\ldots,w_{t,36}).
\]
Since $h_t$ is nonconstant and $\alpha\neq0$, one has $w_t=h_t^*\alpha\neq0$; hence at least
one eigen-1-form is nonzero and $\Theta_t$ is not nilpotent.
\end{proof}

\begin{remark}
Fix any point $[X_0]\in\mathcal T_2$, we have the finite \mbox{\'{e}tale} cover $q_0:Y_0\to X_0$ defined above. We remark the form $w_0:=h_0^*\alpha\in H^0(K_{Y_0})=H^0(q_0^*K_{X_0})$ gives a zero contraction map $\Phi_{w_0}$ defined in \eqref{global-contraction-map}. This means we cannot expect Theorem~\ref{global-numerical-contraction} to be true when $g=2$ and without assuming any numerical condition.
\end{remark}

\appendix
\section{Local--global spectral sequence}
For self-containedness, we review the standard local-global spectral sequence. For any two coherent sheaves \( \mathcal F,\mathcal G\) on a scheme \( X \). The Ext sheaf
\(\mathscr{E}\!xt^n(\mathcal F, \mathcal G)\) is defined to be the right derived functor of the sheaf \(\mathscr{H}\!om(\mathcal F, -)\). The Ext group \(\mathrm{Ext}^n(\mathcal F, \mathcal G)\) is defined to be the right derived functor of \(\mathrm{Hom}(\mathcal F, -)\).  

The Grothendieck spectral sequence for the composition \(\Gamma \circ \mathscr{H}\!om(\mathcal F, -)\) gives the local-to-global spectral sequence (see \cite[Chapter 29.2]{Risingsea})

\begin{align}\label{loctogol}
E_2^{p,q} = H^p(X, \mathscr{E}\!xt^q(\mathcal F, \mathcal G)) \;\Rightarrow\; \mathrm{Ext}^{p+q}(\mathcal F, \mathcal G).
\end{align}
 
\begin{lem}\label{A1} Let $X$ be a projective Noetherian scheme of dimension 1. We have the following short exact sequence
$$0\to  H^1(X, \mathscr{H}\!om(\mathcal F, \mathcal F))\to \mathrm{Ext}^1(\mathcal F, \mathcal F)\to H^0(X, \mathscr{E}\!xt^1(\mathcal F, \mathcal F))\to 0$$
\end{lem}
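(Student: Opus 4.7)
The plan is to deduce the claimed short exact sequence directly from the local-to-global spectral sequence~\eqref{loctogol} with $\mathcal G = \mathcal F$, by exploiting the hypothesis $\dim X = 1$.

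First I would recall that for a Noetherian scheme $X$ of dimension $1$ and any quasi-coherent sheaf $\mathcal H$ on $X$, Grothendieck's vanishing theorem gives $H^p(X,\mathcal H) = 0$ for all $p \geq 2$. Applying this to $\mathcal H = \mathscr{E}\!xt^q(\mathcal F,\mathcal F)$ shows that the $E_2$-page of~\eqref{loctogol} is concentrated in the two columns $p = 0$ and $p = 1$.

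Next I would invoke the standard five-term exact sequence associated to a first-quadrant cohomological spectral sequence $E_2^{p,q} \Rightarrow E^{p+q}$, namely
\begin{equation*}
0 \to E_2^{1,0} \to E^1 \to E_2^{0,1} \to E_2^{2,0} \to E^2.
\end{equation*}
Substituting our terms, this becomes
\begin{equation*}
0 \to H^1(X,\mathscr{H}\!om(\mathcal F,\mathcal F)) \to \mathrm{Ext}^1(\mathcal F,\mathcal F) \to H^0(X,\mathscr{E}\!xt^1(\mathcal F,\mathcal F)) \xrightarrow{d_2} H^2(X,\mathscr{H}\!om(\mathcal F,\mathcal F)).
\end{equation*}
The rightmost term vanishes by the dimension hypothesis, so the connecting differential $d_2$ is forced to be zero and the surjectivity of $\mathrm{Ext}^1(\mathcal F,\mathcal F) \to H^0(X,\mathscr{E}\!xt^1(\mathcal F,\mathcal F))$ follows, giving the desired short exact sequence.

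There is essentially no obstacle here; the only point to be careful about is to ensure one is using the correct form of the five-term sequence (so that the edge maps appear in the right order), which can alternatively be verified by hand: the spectral sequence degenerates at $E_2$ in total degree $1$, and the induced filtration on $\mathrm{Ext}^1(\mathcal F,\mathcal F)$ has exactly two graded pieces, $H^1(X,\mathscr{H}\!om(\mathcal F,\mathcal F))$ (the subobject, coming from $p=1$, $q=0$) and $H^0(X,\mathscr{E}\!xt^1(\mathcal F,\mathcal F))$ (the quotient, coming from $p=0$, $q=1$), which is precisely the asserted short exact sequence.
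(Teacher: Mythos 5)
Your proof is correct and takes essentially the same route as the paper: both apply the five-term exact sequence of the local-to-global spectral sequence and observe that $E_2^{2,0}=H^2(X,\mathscr{H}\!om(\mathcal F,\mathcal F))$ vanishes because $\dim X=1$. Your write-up merely makes explicit the appeal to Grothendieck's vanishing theorem, which the paper leaves implicit.
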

\begin{proof}For a first quadrant spectral sequence $E^{p,q}\;\Rightarrow\; E^{p+q}$, we have the 5-term exact sequence \[
0 \to E_2^{1,0} \to E^1 \to E_2^{0,1} \to E_2^{2,0} \to E^2.
\]
Applying this to the local--global spectral sequence and note that $E_2^{2,0}$ is zero in this case.\end{proof}

\section{Normalization and the endomorphism sheaf}
We are grateful to Jinbang Yang for helpful discussions about the following proposition and its proof.
\begin{prop}\label{normendo}
Let $X$ be a reduced quasi-projective scheme over $\mathbb C$, not necessarily irreducible, and let $\mathcal F$ be a coherent $\mathcal O_X$-module that is torsion-free and has rank one on every irreducible component of $X$. Let
\[
\nu:\widetilde X\longrightarrow X
\]
be the normalization. Then there are natural inclusions of coherent $\mathcal O_X$-algebras
\begin{equation}\label{Endo}
\mathcal O_X\hookrightarrow
\mathscr{E}\!nd_{\mathcal O_X}(\mathcal F)
\hookrightarrow
\nu_*\mathcal O_{\widetilde X}.
\end{equation}
\end{prop}

\begin{proof}
The assertion is local on $X$. Let $U=\operatorname{Spec}A\subset X$ be an affine open subset, and let $M$ be the finite $A$-module corresponding to $\mathcal F|_U$. Since $A$ is reduced, the set $S$ of non-zero-divisors of $A$ is multiplicatively closed, and the total ring of fractions
\[
Q(A):=S^{-1}A
\]
is a finite product of fields, one for each irreducible component of $U$. The natural map $A\to Q(A)$ is injective.

Because $M$ is torsion-free and has rank one on every irreducible component, the localization map is injective and gives
\[
M\hookrightarrow M\otimes_A Q(A)\simeq Q(A).
\]
Localizing an $A$-linear endomorphism of $M$ therefore yields an injective homomorphism
\[
\operatorname{End}_A(M)
\hookrightarrow
\operatorname{End}_{Q(A)}\bigl(M\otimes_A Q(A)\bigr)
\simeq
\operatorname{End}_{Q(A)}\bigl(Q(A)\bigr)
\simeq Q(A).
\]
Under this identification, an element $\varphi\in\operatorname{End}_A(M)$ is represented by some $q\in Q(A)$ satisfying
\[
qM\subseteq M.
\]

We claim that $q$ is integral over $A$. First observe that $M$ is a faithful $A$-module. Indeed, if $aM=0$ for some $a\in A$, then after tensoring with $Q(A)$ we obtain $aQ(A)=0$. Since $A\hookrightarrow Q(A)$, this implies $a=0$.

Choose generators $m_1,\ldots,m_r$ of $M$ over $A$. Since $qM\subseteq M$, there are coefficients $a_{ij}\in A$ such that
\[
qm_i=\sum_{j=1}^r a_{ij}m_j,
\qquad i=1,\ldots,r.
\]
Let $B=(a_{ij})\in M_r(A)$ and let $m=(m_1,\ldots,m_r)^t$. Thus
\[
\det(qI_r-B)m_i=0
\qquad\text{for every }i.
\]
Since the $m_i$ generate $M$, the element $\det(qI_r-B)\in Q(A)$ annihilates $M$. After tensoring with $Q(A)$, it therefore annihilates
\[
M\otimes_A Q(A)\simeq Q(A),
\]
and hence $\det(qI_r-B)=0$ in $Q(A)$. Thus $q$ is integral over $A$.

Let $\overline A$ denote the integral closure of $A$ in $Q(A)$. The preceding argument proves
\[
\operatorname{End}_A(M)\subseteq\overline A.
\]
Since $X$ is quasi-projective, its normalization is finite, and $\nu^{-1}(U)=\operatorname{Spec}\overline A.$ Consequently,
\[
\operatorname{End}_A(M)
\subseteq
\Gamma\bigl(\nu^{-1}(U),\mathcal O_{\widetilde X}\bigr).
\]

Finally, multiplication defines a homomorphism
\[
A\longrightarrow\operatorname{End}_A(M).
\]
It is injective because $M$ is faithful. Sheafifying these local inclusions proves \eqref{Endo}.
\end{proof}

\thispagestyle{empty}

\end{document}